\theoremstyle{plain}
\numberwithin{equation}{section}
\newtheorem{theorem}{Theorem}[section]
\newtheorem{lemma}[theorem]{Lemma}
\newtheorem{definition}[theorem]{Definition}
\newtheorem{remark}[theorem]{Remark}
\newcommand{\ud}{\mathrm{d}}
\begin{document}
\title[Stability for the multivariate geometric Brownian motion]{\sc{The stability of the multivariate geometric Brownian motion as a bilinear matrix inequality problem}}

\author{Gerardo Barrera}
\address{Center for Mathematical Analysis, Geometry and Dynamical Systems (CAMGSD), Instituto Superior T\'ecnico, Universidad de Lisboa, Lisbon, Portugal.
\url{https://orcid.org/0000-0002-8012-2600}}
\thanks{*Corresponding author: Gerardo Barrera.}
\email{gerardo.barrera.vargas@tecnico.ulisboa.pt}

\author{Eyleifur Bjarkason}
\address{Science Institute and Faculty of Physical Sciences, University of Iceland. Dunhagi 5, 107 Reykjav\'{i}k, Iceland.}
\email{eyleifur@gmail.com}

\author{Sigurdur Hafstein}
\address{Science Institute and Faculty of Physical Sciences, University of Iceland. Dunhagi 5, 107 Reykjav\'{i}k, Iceland.
\url{https://orcid.org/0000-0003-0073-2765}
}
\email{shafstein@hi.is}

\subjclass[2000]{Primary 60H10, 93D05, 93D30; Secondary  34F05, 37H30, 93D23}
\keywords{Baker--Campbell--Hausdorff--Dynkin Formula; Bilinear Matrix Inequality; Exponential Stability; Linear Matrix Inequality; Linear Stochastic Differential Equations; Lyapunov Functions; Magnus expansion; Multivariate Geometric Brownian Motion; Stability}

\begin{abstract}
In this manuscript, we study the stability of the origin for
the multivariate geometric Brownian motion.
More precisely,
under suitable sufficient conditions,
we construct a Lyapunov function
such that the origin of the multivariate geometric Brownian motion is globally asymptotically stable in probability. Moreover, we show that such conditions can be rewritten as a  Bilinear Matrix Inequality (BMI) feasibility problem.
We stress that no commutativity relations between the drift matrix and the noise dispersion matrices are assumed and therefore the so-called Magnus representation of the solution of the multivariate geometric Brownian motion is complicated.
In addition, we exemplify our method in numerous specific models from the literature such as random linear oscillators, satellite dynamics, inertia systems,
diagonal and non-diagonal noise systems, cancer self-remission  and smoking.
\end{abstract}
\maketitle


\section{\textbf{Introduction}}\label{sub:MBM}\hfill

\noindent
Random dynamical systems and in particular stochastic differential equations  (for shorthand SDEs) are used in science in the modelling of  systems driven by deterministic forces and external random fluctuations, see the monographs~\cite{Arnold,Capasso,Cufaro,Karatzas,
Khasminskii,Kloeden,Mao,Oks00,Pavliotis}.

Let $n,\ell \in \mathbb{N}:=\{1,2,\ldots\}$ be fixed and let
$\mathsf{M}_n$ be the set of all $n\times n$ matrices with real entries.
For a given $x\in \mathbb{R}^n$,
let $(X(t;x))_{t\geq 0}$ be the unique strong solution of the autonomous (constant coefficients) linear $n$-dimensional SDE in the It\^o sense
\begin{equation}\label{eq:modelito}
\left\{
\begin{array}{r@{\;=\;}l}
\ud X(t;x) & AX(t;x) \ud t+\sum\limits_{j=1}^{\ell}B_jX(t;x)\ud W_j(t),\quad t\geq 0,\\
X(0;x) & x,
\end{array}
\right.
\end{equation}
where $A,B_1,\ldots,B_\ell \in \mathsf{M}_n$ and $W_j:=(W_j(t))_{t\geq 0}$ for $j\in \{1,\ldots,\ell\}$ are independent and identically distributed
standard one-dimensional Brownian motions. Since the SDE~\eqref{eq:modelito} is linear, the existence and uniqueness of the path-wise strong solution
of~\eqref{eq:modelito} is asserted, for instance, by Theorem~3.1 in~\cite{Mao} or Theorem~4.5.3
in~\cite{Kloeden}.
From now on, we denote by $(\Omega,\mathcal{F}, (\mathcal{F}_t)_{t\geq 0},\mathbb{P})$ the filtered complete probability space satisfying the usual conditions, see Definition~2.25  of~\cite{Karatzas},
where the independent standard Brownian motions $W_1,\ldots,W_\ell$ are defined, and denote by $\mathbb{E}$ the expectation with respect to the probability measure $\mathbb{P}$. Hence, $(X(t;x))_{t\geq 0}$ can be taken as a stochastic process in the same filtered
probability space $(\Omega,\mathcal{F}, (\mathcal{F}_t)_{t\geq 0},\mathbb{P})$ and such that its marginal at a fixed time $t\geq 0$, $X(t;x)$, takes values in $\mathbb{R}^n$.
It is not hard to write~\eqref{eq:modelito} in the Stratonovich sense (here denoted by $\circ$)
\begin{equation}\label{eq:modelstra}
\left\{
\begin{array}{r@{\;=\;}l}
\ud X(t;x) & \Big(A-\frac{1}{2}\sum\limits_{j=1}^{\ell}B^2_j\Big)X(t;x)+\sum\limits_{j=1}^{\ell}B_jX(t;x)\circ\ud W_j(t),
\quad t\geq 0,\\
X(0;x) & x,
\end{array}
\right.
\end{equation}
see Section~4.9 in~\cite{Kloeden}.
The solution of~\eqref{eq:modelito} is called a multivariate geometric Brownian motion.

In the sequel, we provide some literature for the  geometric Brownian motion and its applications that has been produced in recent years.
The (multivariate) geometric Brownian motion is an important class of models in mathematical finance in the modeling of stock
prices, see~\cite{Hu,Lima,Mantegna,MR05,PK2013,Pirjoluno,Pirjol,PP06,Shreve,Stojkoski}. It is of interest to compute or estimate 
expected values of observables of the process at a finite time horizon.
It has been used in physics as a model 
in turbulence theory~\cite{Giordano},
harmonic oscillators with multiplicative noise~\cite{GittermanMulti,Gitterman}, inertia systems~\cite{Hutwo},
and it also serves as an important model in biology such as cancer and tumor stability~\cite{Oroji,Sarkar}, smoking dynamics~\cite{Lahrouz,Sharomi}, etc. 
Moreover, it is used in heat propagation, cosmology and statistical field theory,
neuron models, gene expression, molecular motors, see the introduction of~\cite{Giordano}
and the references therein.
We also refer to~\cite{TrefethenEmbree,Trefethen} for models and stability of hydrodynamic.
Moreover, it is an important and well-studied mathematical
object in its own right and a prototype for multiplicative noise with the random term directly proportional to the current state of the system, see~\cite{Appleby,BHP,Bycz,Dietz,Duf,Gitterman,Haskovec} and the references therein.

For $U,V\in \mathsf{M}_n$ we denote their  Lie commutator by $[U,V]:=UV-VU$. Let $O_{n \times n}$ be the zero element of
$\mathsf{M}_n$.
Under the additional commutativity conditions
\[
[A,B_j]=O_{n \times n}\quad \textrm{ and } \quad [B_j,B_k]=O_{n \times n}\quad \textrm{ for all } \quad j,k\in \{1,\ldots,\ell\},\]
a closed expression for the solution
of~\eqref{eq:modelito} is given by
\begin{equation}\label{ec:comu}
X(t;x)=\exp\left(\Big(A-\frac{1}{2}\sum_{j=1}^{\ell}B^2_j\Big)t+\sum_{j=1}^{\ell}B_jW_j(t)\right)x\ \ \textrm{ for any }\ \ x\in \mathbb{R}^n\ \ \textrm{ and }\ \  t\geq 0,
\end{equation}
see for instance Section~3.4 in~\cite{Mao} or Section~4.8
in~\cite{Kloeden}.
Nevertheless, in general,
for $n\in \mathbb{N}\setminus\{1\}$, we point out that a closed-form expression of~\eqref{eq:modelito} is not straightforward due to the celebrated Baker--Campbell--Hausdorff--Dynkin formula for matrix exponentials, see Chapter~5 in~\cite{Hall} and~\cite{Magnus}.
We remark that solving non-commutative stochastic systems such as~\eqref{eq:modelito} is not an easy task.
Indeed, when no commutativity properties between $A,B_1,\ldots,B_\ell \in \mathsf{M}_n$ are assumed, the stochastic extension of the Baker--Campbell--Hausdorff--Dynkin formula for linear systems of SDEs yields a complicated closed-form expression of the solution to~\eqref{eq:modelito} in terms of iterated
(nested) commutators, see Theorem~1 of~\cite{Kamm} and Theorem~2.1 in~\cite{Yamato}.
In other words,
the solution of~\eqref{eq:modelito} has a Magnus expansion (exponential shape), which is given by
\begin{equation}\label{eq:shape}
X(t;x)=\exp\left(Y(t)\right)x\quad \textrm{ for any }\quad x\in \mathbb{R}^n\quad \textrm{ and }\quad t\geq 0,
\end{equation}
where the stochastic exponent (logarithm) process $(Y(t))_{t\geq 0}$ is given in Item~(2) of Theorem~1
in~\cite{Kamm}.
Such exponent $Y(t)$ of~\eqref{eq:shape} is expressed in terms of iterated commutators.
For instance, for $\ell=1$ it reads as follows
\begin{equation}
\begin{split}
Y(t)&=B_1 W_1(t)+At+[B_1,A]\left(\frac{1}{2}tW_1(t)-\int_{0}^t W_1(s)\ud s\right)-B^2_1\frac{t}{2}
\\
&\quad+[[A,B_1],B_1]\left(\frac{1}{2}\int_{0}^t W^2_1(s)\ud s-\frac{1}{2}W_1(t)\int_{0}^{t}W_1(s)\ud s+\frac{1}{12}tW^2_1(t)\right)\\
&\quad+[[A,B_1],A]\left(\int_{0}^t s W_1(s)\ud s-\frac{1}{2}t\int_{0}^{t}W_1(s)\ud s-\frac{1}{12}t^2 W_1(t)\right)+\cdots.
\end{split}
\end{equation}
The missing remainder terms in the exponent $Y(t)$ contain only terms that include
higher order iterated commutators of $A$ and $B_1$, see Section~3.1 in~\cite{Kamm}.
Roughly speaking,
commutators are essential for representing the solution of~\eqref{eq:modelito}.
Moreover, the effect of
non-commutativity
yields \textit{non-linear terms} in the stochastic exponent with respect to the time variable.
We recommend the survey~\cite{Blanes} and the references therein
for the Magnus expansions and its various applications in physics, numerical analysis and perturbation theory.
We also recommend Chapter~4 in the Ph.D.~thesis~\cite{Burragethesis} for examples of the Magnus formula in non-commutative systems,
Section~3 in~\cite{Wang} for a graphical representation of the stochastic Magnus
expansion,~\cite{Muniz}~for higher strong order methods on matrix Lie groups,~\cite{Yang}~for
new Magnus-type methods for semi-linear non-commutative It\^o's SDEs
and~\cite{Kammdos} for numerical solutions of kinetic stochastic partial differential equations (SPDEs) via stochastic Magnus expansion.

In the sequel, we present two particular expressions of the stochastic exponent $(Y(t))_{t\geq 0}$ under some commutativity relations between the drift matrix $A$ and the noise dispersion (diffusivity) matrices $B_1,\ldots,B_{\ell}$.
In particular,

(i) for $\ell=1$ and $[A,B_1]=O_{n\times n}$, the Magnus expansion~\eqref{eq:shape} reads as follows
\begin{equation}\label{eq:martingale}
X(t;x)=\exp(At)M(t)x\quad \textrm{ for any }\quad x\in \mathbb{R}^n\quad \textrm{ and }\quad t\geq 0.
\end{equation}
where $M(t):=\exp(B_1W_1(t)-B^2_1 \frac{t}{2})$, $t\geq 0$.
We point out that the random process $(M(t))_{t\geq 0}$ is a martingale with respect to the natural filtration of the Brownian motion $W_1$. 
For $n=1$, it corresponds to the so-called risk-neutral measure in the  Black--Scholes--Merton model in mathematical finance, see Chapter~9 in~\cite{Mao}
and~\cite{Shreve}.
The expression~\eqref{eq:martingale} is also a particular case of~\eqref{ec:comu}.
The long-time behavior of~\eqref{eq:martingale} can be analyzed by the so-called Law of Iterated Logarithm. In fact, 
when $A<B^2_1/2$ one can note that an individual that hold for long-time the asset modeled by the Black--Scholes--Merton will be ruined almost surely, see Section~9.1 in~\cite{Mao} for details.

(ii) for $\ell=1$, $[A,B_1]\neq O_{n\times n}$ and $[[A,B_1],A]=[[A,B_1],B_1]=O_{n\times n}$ the Magnus expansion~\eqref{eq:shape} reads as follows
\begin{equation}\label{eq:martingaleno}
X(t;x)=\exp\left(
\left(A-\frac{B^2_1}{2}\right)t+B_1 W_1(t)+[B_1,A]\left(\frac{1}{2}tW_1(t)-\int_{0}^t W_1(s)\ud s\right)\right)x
\end{equation}
for any $x\in \mathbb{R}^n$ and $t\geq 0$.
We remark that the no commutativity assumption $[A,B_1]\neq O_{n\times n}$ turns out in the appearance of  a  random shifted \textit{integrated Brownian motion} $(I_t)_{t\geq 0}$ given by
\begin{equation}\label{eq:IBM}
I_t:=\frac{1}{2}tW_1(t)-\int_{0}^t W_1(s)\ud s,\quad
t\geq 0,
\end{equation}
in the  solution~\eqref{eq:martingaleno}. Using integration by parts formula and It\^o's isometry, one can verify that for each $t>0$, the random variable $I_t$ has a Gaussian distribution with zero mean and non-linear  (with respect to the time variable) variance $t^3/12$.

We also point out that the Magnus expansion of the solution to~\eqref{eq:modelito} is challenging even in dimension two. Indeed,
by the Table-Tennis Lemma (a.k.a.~Ping-Pong Lemma)
we note that particular $A$ and $B_1$, e.g.,~
\begin{equation}\label{eq:table}
A:=\begin{pmatrix}
1 & 2\\
0 & 1
\end{pmatrix}
\quad \textrm{ and }\quad
B_1:=\begin{pmatrix}
1 & 0\\
2 & 1
\end{pmatrix},
\end{equation}
generate the Sanov subgroup of the special linear group (under the usual multiplication of matrices)
 $\textsf{SL}(2,\mathbb{Z})$, which is free of rank two.
Since $A$ and $B_1$ are invertible,
we have that all iterated commutators of $A$ and $B_1$ are not zero,
see Example~25 on p.~26 of~\cite{Harpe}.

Since~\eqref{eq:shape} is complicated, the stability of the null solution can in general not be read easily from it. However, for a general $A\in \mathsf{M}_n$
the mean vector function $(m(t;x))_{t\geq 0}$, where $m(t;x):=\mathbb{E}[X(t;x)]$, $t\geq 0$,
satisfies the following vector differential equation
\begin{equation}\label{eq:meanequ}
\left\{
\begin{array}{r@{\;=\;}l}
\frac{\ud}{\ud t}m(t;x) & A\, m(t;x),\quad t\geq 0,\\
m(0;x)& x.
\end{array}
\right.
\end{equation}
In other words, $m(t;x)=e^{At}x$ for all $t\geq 0$ and $x\in \mathbb{R}^n$.
In addition, for  general $A,B_1,\ldots,B_\ell\in \mathsf{M}_n$ the autocorrelation matrix function $(U(t;x))_{t\geq 0}$, where $U(t;x):=\mathbb{E}[X(t;x)(X(t;x))^*]$, $t\geq 0$,
solves the following matrix differential equation
\begin{equation}\label{eq:autoequ}
\left\{
\begin{array}{r@{\;=\;}l}
\frac{\ud}{\ud t}U(t;x) & AU(t;x)+U(t;x)A^*+\sum\limits_{j=1}^{\ell}B_jU(t;x) B^*_j,\quad t\geq 0,\\
U(0;x)& xx^*,
\end{array}
\right.
\end{equation}
where $*$ denotes the transpose linear operator, see Theorem~3.2
in~\cite{Mao} or Theorem~(8.5.5) in~\cite{Arnold}.
We point out that using the usual \textsf{vec} operator, that is,
\[
\textsf{vec}(C):=(C_{1,1},\ldots,C_{1,n},C_{2,1},\ldots,C_{2,n},\ldots,C_{n,1},\ldots,C_{n,n})\in \mathbb{R}^{n^2}
\]
for $C=(C_{i,j})_{i,j\in\{1,\ldots,n\}}\in \mathsf{M}_n$,
one can
rewrite~\eqref{eq:autoequ} as a vector linear differential equation in $\mathbb{R}^{n^2}$.
More precisely,
\begin{equation}\label{eq:autoequvect}
\left\{
\begin{array}{r@{\;=\;}l}
\frac{\ud }{\ud t} \textsf{vec}(U(t;x)) & L\, \textsf{vec}(U(t;x)),\quad t\geq 0,\\
\textsf{vec}(U(0;x)))& \textsf{vec}(xx^*),
\end{array}
\right.
\end{equation}
where
\[
 \mathsf{M}_{n^2}\ni L:=A\otimes I_n+ I_n\otimes A+\sum_{j=1}^{\ell} B_j\otimes B^*_j,
\]
with $I_n$ denoting the identity matrix in $\mathsf{M}_n$ and $\otimes$ denoting the usual Kronecker product between matrices in $\mathsf{M}_n$,
see for instance Section~3.1 in~\cite{Buckwar} or Section~11.4 in~\cite{Arnold}.
Then the solution of~\eqref{eq:autoequvect} is given by
\[
\textsf{vec}(U(t;x))=e^{Lt}\textsf{vec}(xx^*)\quad \textrm{ for }\quad t\geq 0,\, x\in \mathbb{R}^n.
\]
We note that the null solution matrix $O_{n\times n}\in \textsf{M}_n$ of~\eqref{eq:autoequ} is asymptotically stable, if and only if the spectrum of the matrix $L$ is contained in the open left half-plane,
see Theorem~8.2 in~\cite{Hespanhabook}.

We say that the zero solution of~\eqref{eq:modelito} is exponentially $p$-stable for some $p>0$, if and only if, 
there exist positive constants $C:=C_p>$ and $\gamma:=\gamma_p$ satisfying
\[
\mathbb{E}\left[\|X(t;x)\|^p \right]\leq C\|x\|^p e^{-\gamma t}\quad \textrm{ for any  } \quad t\geq 0\quad \textrm{ and }\quad x\in \mathbb{R}^n,
\]
where $\|\cdot \|$ is the Euclidean norm in $\mathbb{R}^n$,
see Definition~\ref{def:pes} below.
When $p=2$, we say that the zero solution of~\eqref{eq:modelito} is exponentially mean-square stable.
Moreover, the zero solution of~\eqref{eq:modelito} is exponentially mean-square stable, 
if and only if, the zero matrix solution of the deterministic system~\eqref{eq:autoequ} is asymptotically stable,
see Item c) in (11.3.3)~Remark,  in~\cite{Arnold}.
Hence,~\eqref{eq:autoequ} provides a natural way to study mean-square stability. However, due to the lack
of a similar equation as~\eqref{eq:autoequ} for $p>0$, $p\neq 2$,
the study of the exponential $p$-stability for $p>0$, $p\neq 2$, (see Definition~\ref{def:pes} below) is \textit{a priori} not straightforward.

In this paper, we provide sufficient conditions for the
exponentially $p$-stable for some $p>0$
of the null solution of~\eqref{eq:modelito}.
The novelty and the main contribution of this paper is that such conditions can be rewritten as a Bilinear Matrix Inequality (BMI) problem.
Finally,  we exemplify our method in specific models from the literature.  More exactly, we consider random linear oscillators, satellite dynamics, inertia systems,
diagonal noise systems, off-diagonal noise systems, cancer self-remission modeling and modeling of smoking.
We emphasize that commutativity relations between the drift matrix and the noise matrices are \textit{not} assumed.

The rest of the manuscript is organized as follows. In Section~\ref{sec:pre} we provide the preliminaries and the main results of the manuscript. Next, in
Section~\ref{sec:examples}, we apply our main results to  specific models.
In Subsection~\ref{sec:dimtwo} we examine two dimensional models. More precisely, in Subsection~\ref{sec:rlo} we analyze a  random linear oscillator, in Subsection~\ref{sec:satelite} we study satellite dynamics, in Subsection~\ref{sec:inertia} we analyze a two-inertia system, in
Subsection~\ref{sec:diagonal} we study a diagonal noise system, and finally in
Subsection~\ref{sec:offdiagonal} we study off-diagonal noise systems.
In Subsection~\ref{sec:dimthree} we examine three dimensional systems.
More specific, in Subsection~\ref{sec:cancertumor}
we analyze a cancer self-remission model and in Subsection~\ref{sec:smookingstable} we study a smoking model. Finally, we give an appendix, which is divided in two sections. In Appendix~\ref{ap:proofmaintheorem} we prove Lemma~\ref{lem:lyapunov} and Theorem~\ref{th:stability} and in
Appendix~\ref{ap:sarkar} we prove Theorem~4.2 in~\cite{Sarkar}.

\section{\textbf{Preliminaries and main results}}\label{sec:pre}
\hfill

\noindent
The concept of stability of a given dynamic system
was introduced by A.~Lyapunov in~1892,
see~\cite{lyapunovbook1907endtrans}. Roughly speaking, it
measures the
robustness of a steady state of the system to small variations in the initial state or in the parameters of the system.
As for ordinary differential equations (for shorthand ODE), many stability properties for SDEs can be
analyzed and characterized by the existence of so-called Lyapunov functions,
see for instance the monograph~\cite{Shaikhet} for further details.
Since Lyapunov functions can in general not be computed analytically, one usually must resort to computational methods, cf.~e.g.~\cite{GieslHafstein,Khasminskii}, Chapter~8 in~\cite{NeckelRupp} and~\cite{Rupp}.
The stability can be also analyzed using the so-called (upper) Lyapunov exponents, nevertheless, in general there are no formulas for such exponents, for further discussions see for instance~\cite{Arnold,Imkeller,Mao,Talay}.

By It\^o's formula, see for instance Chapter~4
in~\cite{Mao},
the generator $\mathcal{L}$ of~\eqref{eq:modelito}  acts on a scalar smooth observable $V\in \mathcal{C}^2(\mathbb{R}^n,\mathbb{R})$
as follows
\begin{equation}\label{eq:lyapunov}
(\mathcal{L}V)(z)=\langle \nabla V(z),Az\rangle+\frac{1}{2}\sum_{k=1}^{\ell} z^* B^*_k \textsf{Hess}(V(z))B_k z\quad \textrm{ for any }\ \ z\in \mathbb{R}^n,
\end{equation}
where $\textsf{Hess}(V(z))$ denotes the Hessian matrix of $V$ at the point $z$, $\langle\cdot,\cdot\rangle$ denotes the standard inner product of $\mathbb{R}^n$ and recall that $*$ denotes the transpose linear operator.
The generator~\eqref{eq:lyapunov} reads in the canonical coordinates as follows
\begin{equation}
(\mathcal{L}V)(z)=\sum_{i,j=1}^{n} A_{i,j}z_{j} \frac{\partial}{\partial z_{i}} V(z)+\frac{1}{2}\sum_{k=1}^{\ell}\sum_{i_1,i_2,j_1,j_2=1}^{n}
(B_k)_{i_1,j_1}
(B_k)_{i_2,j_2}z_{j_1} z_{j_2}
 \frac{\partial^2}{\partial z_{i_1} \partial z_{i_2}}V(z),
\end{equation}
for any $z\in \mathbb{R}^n$.
The zero element of $\mathbb{R}^n$ is denoted by $0_\textsf{n}$.
By Lemma~5.3 in~\cite{Khasminskii} we have that $0_n$ is
inaccessible to any sample-path of the
process~\eqref{eq:modelito} with initial condition $x\neq 0_n$. Therefore, we can still apply the
generator~\eqref{eq:lyapunov} for smooth observables $V$ that are twice continuously differentiable except perhaps at the point $0_n$.
It is easy to see  that the null solution (a.k.a. trivial solution or equilibrium position) $X(t;0_\textsf{n})\equiv 0_\textsf{n}$ for all $t\geq 0$ is a fixed point (the Dirac mass at zero $\delta_0$ is a stationary law) of the stochastic dynamics given
by~\eqref{eq:modelito}.

We start recalling the definitions of Stability in Probability (SiP), 
Asymptotically Stable in Probability (ASiP),
Global Asymptotic Stability in Probability (GASiP) and the Exponential $p$-Stability (p-ES).
We recall that $\|\cdot \|$ is the Euclidean norm induced by the standard inner product $\langle\cdot,\cdot\rangle$.

\begin{definition}[Stability in Probability (SiP)]
\hfill

\noindent
The null solution $(X(t;0_\textsf{n}))_{t\geq 0}$
of~\eqref{eq:modelito} is stable in probability if and only if for any $r>0$ and $\varepsilon>0$ there exists $\delta>0$ such that
\[
\mathbb{P}\left(\sup\limits_{t\geq 0}\|X(t;x)\|\leq r\right)>1-\varepsilon \quad \textrm{ whenever }\quad \|x\|\leq \delta.
\]
\end{definition}

\begin{definition}[Asymptotically stable in probability (ASiP)]
\hfill

\noindent
The null solution $(X(t;0_\textsf{n}))_{t\geq 0}$
of~\eqref{eq:modelito} is asymptotically stable in probability if and only if for any $\varepsilon>0$  there exists $\delta=\delta_\varepsilon>0$ such that
\[
\mathbb{P}\left(\lim\limits_{t\to \infty}\|X(t;x)\|=0\right)>1-\varepsilon \quad \textrm{ whenever }\quad \|x\|\leq \delta.
\]
\end{definition}

\begin{definition}[Global Asymptotic Stability in Probability (GASiP)]
\hfill

\noindent
The null solution $(X(t;0_\textsf{n}))_{t\geq 0}$
of~\eqref{eq:modelito} is globally asymptotically stable in probability if and only if it is SiP and for any initial condition $x\in \mathbb{R}^n$ it follows that
\[
\mathbb{P}\left(\lim\limits_{t\to \infty}\|X(t;x)\|=0\right)=1.
\]
\end{definition}

\begin{definition}[Exponential $p$-Stability (p-ES)]\label{def:pes}
\hfill

\noindent
The null solution $(X(t;0_\textsf{n}))_{t\geq 0}$
of~\eqref{eq:modelito} is exponentially $p$-stable for some $p>0$ if and only if there exist positive constants $C:=C_p$ and $\gamma:=\gamma_p$ such that
\[
\mathbb{E}\left[\|X(t;x)\|^p \right]\leq C\|x\|^p e^{-\gamma t}\quad \textrm{ for any  } \quad t\geq 0\quad \textrm{ and }\quad x\in \mathbb{R}^n.
\]
\end{definition}

For the linear SDE~\eqref{eq:modelito} we have  that GASiP is equivalent to  ASiP. Moreover, $p$-ES for any $p>0$ implies GASiP. For details, we refer to Proposition~1 in~\cite{HafsteinGudmundsson}.
For others concepts of stability for the null solution of~\eqref{eq:modelito}, we refer to Definition~2 and Remark~3 in~\cite{HafsteinGudmundsson}.

Let $A\in \mathsf{M}_n$ be fixed. We recall that the matrix $A$ is Hurwitz stable, if and only if the spectrum of $A$ (the set of the eigenvalues of $A$) is contained in $\mathbb{C}^{-}:=\{z\in \mathbb{C}:\mathsf{Re}(z)<0\}$.
We recall that the set of Hurwitz matrices is not a convex set and that  it is not closed under the usual addition and the usual multiplication of matrices.

We start with the following observation.
The autonomous linear differential system $\dot X(t;x)=AX(t;x)$, $t\geq 0$
is asymptotically stable,
if and only if the matrix $A$ is Hurwitz stable.
It is well-known that then it is possible to construct a classical Lyapunov function $V:\mathbb{R}^n\to [0,\infty)$ of the form $V(x)=x^*Qx=:\|x\|^2_{Q}$, where $Q\in \mathsf{M}_n$ is the unique symmetric and positive definite solution of the so-called Matrix Lyapunov Equation
\[
A^*Q+QA=-I_n\quad \textrm{ with } I_n \textrm{ being the identity in}\ \textsf{M}_n.
\]
Even though $Q=\int_0^\infty e^{A^*s}e^{A s}\ud s$, we point out  that \textit{generically}  there is no explicit formula for $Q$. For the particular case $A^*=A$ we have that $Q=-\frac{1}{2}A^{-1}$.
For further details, see~\cite{Horn,Lancaster,Liu,Simoncini}.

In the sequel, we introduction a notation for comparison of matrices.
Let $A,B\in \mathsf{M}_n$ and assume that $A$ and $B$ are symmetric. We write
\begin{equation}\label{eq:geq}
A\succeq B \quad \textrm{ whenever }\quad x^* Ax\geq x^* B x \quad \textrm {for all }\quad x\in \mathbb{R}^n.
\end{equation}
If $A$ is not symmetric, we note that
the antisymmetric part $(1/2)x^*(A-A^*)x=0$ for all $x\in \mathbb{R}^n$. Hence, the notation~\eqref{eq:geq} can also be applied to non-symmetric matrices $A$ and $B$ replacing them by
$(A+A^*)/2$ and $(B+B^*)/2$, respectively.  We define $\preceq$ in the obvious way, i.e., $A \preceq B$ when $-A \succeq -B$.

Now, we state a criterion for verifying GASiP. The proof is given in Theorem~3.2 and Theorem~3.3
of~\cite{HafsteinGudmundsson} or Theorem~5.11 and Theorem~6.2 in~\cite{Khasminskii}.
One of the largest advantages of the Lyapunov method is that an explicit solution to~\eqref{eq:modelito} is not needed, which can be very complicated as we discussed in Section~\ref{sub:MBM}.

\begin{theorem}[Lyapunov's criterium]\label{th:tool}
\hfill

\noindent
The null solution $(X(t;0_\textsf{n}))_{t\geq 0}$
of~\eqref{eq:modelito} is GASiP, if and only if for a $p>0$ there exist positive constants $c_1,c_2,c_3,c_4,c_5$, a scalar function $V\in \mathcal{C}(\mathbb{R}^n, \mathbb{R})\cap \mathcal{C}^2(\mathbb{R}^n\setminus \{0_\textsf{n}\},\mathbb{R})$ and $Q\succ 0$ (symmetric and positive definite square
matrix of dimension $n\times n$)  satisfying
\begin{itemize}
\item[(i)]
$c_1\|x\|^p_Q\leq V(x)\leq c_2 \|x\|^p_Q$
for all  $x\in \mathbb{R}^n$,
\item[(ii)] $(\mathcal{L}V)(x)\leq -c_3 \|x\|^p_Q$ for all $x\in \mathbb{R}^n\setminus \{0_\textsf{n}\}$,
\item[(iii)] the function $V$ is positively homogeneous of degree $p$, that is, $V(\lambda x)=|\lambda|^p V(x)$ for any $x\in \mathbb{R}^n$ and $\lambda>0$.
\item[(iv)] the following uniform bound on the partial derivatives holds true:
\[
\max\limits_{k\in \{1,\ldots,n\}}\left|\frac{\partial }{\partial x_k}V(x)\right|\leq c_4 \|x\|^{p-1}_Q\quad \textrm{ for any }\quad x\in \mathbb{R}^n\setminus \{0_\textsf{n}\},
\]
\item[(v)]
the following uniform bound on the second partial derivatives holds true:
\[
\max\limits_{j,k\in \{1,\ldots,n\}}\left|\frac{\partial^2 }{\partial x_jx_k}V(x)\right|\leq c_5 \|x\|^{p-2}_Q\quad \textrm{ for any }\quad x\in \mathbb{R}^n\setminus \{0_\textsf{n}\},
\]
\end{itemize}
where $\|x\|_Q:=(x^*Qx)^{1/2}$ for all $x\in \mathbb{R}^n$.
\end{theorem}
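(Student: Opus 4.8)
The plan is to prove the two implications separately: the ``if'' direction is a supermartingale argument built on It\^o's formula, and the ``only if'' direction is a converse Lyapunov construction in which one writes down an explicit candidate $V$ and verifies (i)--(v) by hand. For the ``if'' direction, assume $p>0$, positive constants $c_1,\dots,c_5$, $Q\succ 0$ and $V\in\mathcal{C}(\mathbb{R}^n,\mathbb{R})\cap\mathcal{C}^2(\mathbb{R}^n\setminus\{0_{\textsf n}\},\mathbb{R})$ satisfying (i)--(v). Since $0_{\textsf n}$ is inaccessible to the paths started at $x\neq 0_{\textsf n}$ (Lemma~5.3 in~\cite{Khasminskii}), It\^o's formula may be applied to $t\mapsto V(X(t;x))$ on $\mathbb{R}^n\setminus\{0_{\textsf n}\}$; the growth bounds (iv)--(v) together with the polynomial moment estimates available for the linear SDE~\eqref{eq:modelito} ensure (after localization) that the stochastic integral part is a genuine martingale. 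Combining~\eqref{eq:lyapunov}, (ii) and (i) yields $(\mathcal{L}V)(x)\le -(c_3/c_2)V(x)$, so $e^{(c_3/c_2)t}V(X(t;x))$ is a nonnegative supermartingale; taking expectations, applying Gr\"onwall and then (i) gives
\[
\mathbb{E}\big[\|X(t;x)\|_Q^p\big]\le \tfrac{1}{c_1}\mathbb{E}\big[V(X(t;x))\big]\le \tfrac{c_2}{c_1}\,\|x\|_Q^p\,e^{-(c_3/c_2)t},
\]
which, by equivalence of $\|\cdot\|_Q$ and $\|\cdot\|$, is exactly $p$-ES; then $p$-ES $\Rightarrow$ GASiP by Proposition~1 in~\cite{HafsteinGudmundsson}. (Alternatively, SiP follows directly from the supermartingale maximal inequality applied to $V(X(\cdot;x))$ and (i), and the a.s.\ convergence $X(t;x)\to 0_{\textsf n}$ from the supermartingale convergence theorem combined with (ii).)

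For the ``only if'' direction, assume GASiP. For the linear system~\eqref{eq:modelito} this is equivalent to exponential $p$-stability for all sufficiently small $p>0$ (this reduction, via the negativity of the top Lyapunov exponent, is in~\cite{Khasminskii}, cf.\ also~\cite{HafsteinGudmundsson}); fix such a $p$ together with the corresponding constants $C,\gamma$. Take $Q:=I_n$ and define the candidate
\[
V(x):=\int_0^\infty \mathbb{E}\big[\|X(t;x)\|^p\big]\,\ud t,
\]
which is finite by $p$-ES. Writing $X(t;x)=\Phi(t)x$ for the random fundamental matrix $\Phi$ of~\eqref{eq:modelito} with $\Phi(0)=I_n$, pathwise uniqueness gives $X(t;\lambda x)=\lambda X(t;x)$, whence the homogeneity (iii). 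The upper bound in (i) is immediate from $p$-ES. For the lower bound, note that $(t,u)\mapsto \mathbb{E}[\|X(t;u)\|^p]$ is continuous on $[0,\infty)\times S^{n-1}$ and equals $1$ on the compact set $\{0\}\times S^{n-1}$, so there is $t_0>0$ with $\mathbb{E}[\|X(t;u)\|^p]\ge 1/2$ for $t\le t_0$, $u\in S^{n-1}$; by homogeneity $\mathbb{E}[\|X(t;x)\|^p]\ge \tfrac12\|x\|^p$ for all $x$ and $t\le t_0$, hence $V(x)\ge \tfrac{t_0}{2}\|x\|^p$, which gives (i) with $Q=I_n$.

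It remains to check (iv)--(v) and the decrease condition (ii). The derivative bounds follow by differentiating $x\mapsto (x^*\Phi(t)^*\Phi(t)x)^{p/2}$ in $x$ and interchanging with the expectation and the time integral; the interchange is legitimated by the exponential moment estimates $\mathbb{E}[\|\Phi(t)\|^q]\le K_q e^{\kappa_q t}$ for the linear SDE combined with the decay factor $e^{-\gamma t}$ coming from $p$-ES, and the homogeneity of the resulting expressions yields the powers $\|x\|^{p-1}$ and $\|x\|^{p-2}$. For (ii), the Markov and flow properties give, for $s>0$,
\[
\mathbb{E}\big[V(X(s;x))\big]=\int_s^\infty \mathbb{E}\big[\|X(u;x)\|^p\big]\,\ud u=V(x)-\int_0^s \mathbb{E}\big[\|X(u;x)\|^p\big]\,\ud u,
\]
so differentiating at $s=0^{+}$ via Dynkin's formula produces $(\mathcal{L}V)(x)=-\|x\|^p=-\|x\|_Q^p$, i.e.\ (ii) with $c_3=1$.

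The main obstacle is the regularity step in the converse direction: one must show $V\in\mathcal{C}^2(\mathbb{R}^n\setminus\{0_{\textsf n}\})$ and that the semigroup identity above is right-differentiable at $s=0$ with derivative $(\mathcal{L}V)(x)$. For $p\ge 2$ this is routine, since $x\mapsto\|X(t;x)\|^p$ is then $\mathcal{C}^2$ with derivatives controlled by polynomials in $\|\Phi(t)\|$, and the estimates above justify dominated convergence and Fubini. For $0<p<2$ the second derivatives of $\|X(t;x)\|^p$ blow up like $\|X(t;x)\|^{p-2}$ near the origin, so one additionally needs finiteness of the negative moments $\mathbb{E}[\|X(t;x)\|^{p-2}]$; for the multivariate geometric Brownian motion these follow from the inaccessibility of $0_{\textsf n}$ together with standard bounds on $\|\Phi(t)^{-1}\|$. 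Once that integrability is secured, the interchange of limit, expectation and integral goes through and the proof is complete; full details are in~\cite{HafsteinGudmundsson,Khasminskii}.
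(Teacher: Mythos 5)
Your ``if'' direction is sound and is exactly the supermartingale/Dynkin argument behind Theorem~5.11 in~\cite{Khasminskii} (the paper itself only cites that result and Theorems~3.2--3.3 of~\cite{HafsteinGudmundsson}, so there is no in-text proof to compare against). The genuine gap is in the converse direction, in the regularity step you yourself flag: the infinite-horizon candidate $V(x)=\int_0^\infty \mathbb{E}\bigl[\|X(t;x)\|^p\bigr]\,\ud t$ need \emph{not} be $\mathcal{C}^2$ on $\mathbb{R}^n\setminus\{0_{\textsf{n}}\}$ when $0<p<2$, and your proposed justification (combining $\mathbb{E}[\|\Phi(t)\|^q]\le K_q e^{\kappa_q t}$ with the decay $e^{-\gamma t}$ from $p$-ES) cannot close it, because $\kappa_q$ may exceed $\gamma$ by an arbitrary amount. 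A concrete counterexample already occurs with no noise: take $B_j=O_{n\times n}$ and $A=\mathrm{diag}(-\epsilon,-R)$ with $0<\epsilon<R$ and $(2-p)R>2\epsilon$. Then the system is $p$-ES and GASiP, $V(x)=\int_0^\infty\bigl(e^{-2\epsilon t}x_1^2+e^{-2Rt}x_2^2\bigr)^{p/2}\ud t$ is finite, but at $x=(0,1)$ the second difference quotient in the $x_1$-direction is, by Fatou, bounded below by $\int_0^\infty p\,e^{((2-p)R-2\epsilon)t}\,\ud t=+\infty$, so $V$ fails to be twice differentiable there and condition (v) cannot hold. Note that shrinking $p$ (your reduction to small $p$) only makes $(2-p)R$ larger, so it worsens the problem.

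The standard repair, and the one used in the cited converse theorem (Theorem~5.13 in~\cite{Khasminskii}), is to integrate over a \emph{finite} horizon: set $V(x):=\int_0^T\mathbb{E}\bigl[\|X(t;x)\|^p\bigr]\ud t$ with $T$ chosen from $p$-ES so that $\mathbb{E}\bigl[\|X(T;x)\|^p\bigr]\le\tfrac12\|x\|^p$. Then all moment estimates (including those for $\|\Phi(t)^{-1}\|$ needed when $0<p<2$) are only required to be locally bounded on $[0,T]$, the differentiation under the integral sign goes through, homogeneity gives (iii)--(v), and the flow identity yields $(\mathcal{L}V)(x)=\mathbb{E}\bigl[\|X(T;x)\|^p\bigr]-\|x\|^p\le-\tfrac12\|x\|^p$, i.e.\ (ii). With that substitution (and keeping your reduction GASiP $\Rightarrow$ $p$-ES for small $p$, which is Theorem~6.2 in~\cite{Khasminskii} and a genuinely nontrivial input), your argument matches the cited proofs.
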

The function $V$ in Theorem~\ref{th:tool} is called a Lyapunov function for~\eqref{eq:modelito}. It can be interpreted in physical terms as a potential energy.
Since the noise in~\eqref{eq:modelito} is multiplicative, the so-called stabilization by noise phenomenon naturally occurs. However, we point out that for the linear SDE~\eqref{eq:modelito}, of which the deterministic dynamics is not stable, there cannot exists a globally twice differentiable Lyapunov function, see Remark~5.5 in~\cite{Khasminskii}.

\begin{lemma}[Lyapunov function]\label{lem:lyapunov}
\hfill

\noindent
The null solution $(X(t;0_\textsf{n}))_{t\geq 0}$
of~\eqref{eq:modelito} is GASiP if
for some $p>0$, $c>0$ and positive definite square matrix $Q\in \mathsf{M}_n$,  we have
\begin{equation}\label{eq:Hine}
H(x)\geq c \|x\|^4 \quad \textrm{ for all }\quad x\in \mathbb{R}^n,
\end{equation}
where 
\begin{align}\label{cond:H}
\begin{split}
H(x):&=-x^* \left( A^* Q+  QA+ \sum_{j=1}^\ell B_j^* Q B_j\right)x \|x\|_Q^2\\
&\qquad + \frac{2-p}{4}\sum_{j=1}^\ell \left(x^* (QB_j+B_j^* Q)x\right)^2, \quad x\in \mathbb{R}^n.
\end{split}
\end{align}
with $\|x \|$ being the Euclidean norm of $x\in \mathbb{R}^n$,
and $\|x\|_Q=(x^*Qx)^{1/2}$ for all $x\in \mathbb{R}^n$.
In fact, if~\eqref{eq:Hine} holds true, then $V(x)=\|x\|_Q^p$ is a Lyapunov function for~\eqref{eq:modelito} and the null solution is $p$-ES.
\end{lemma}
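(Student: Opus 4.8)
The plan is to check directly that the candidate $V(x):=\|x\|_Q^p=(x^*Qx)^{p/2}$ satisfies all five hypotheses of Theorem~\ref{th:tool}, and then to promote the resulting Lyapunov inequality to the $p$-ES estimate. For $p>0$ the function $V$ is continuous on $\mathbb{R}^n$, of class $\mathcal{C}^2$ on $\mathbb{R}^n\setminus\{0_\textsf{n}\}$, and positively homogeneous of degree $p$, so (i) holds with $c_1=c_2=1$ and (iii) is immediate. The only tool used repeatedly is that, since $Q\succ0$, one has $\lambda_{\min}(Q)\|x\|^2\le\|x\|_Q^2\le\lambda_{\max}(Q)\|x\|^2$ with $\lambda_{\min}(Q),\lambda_{\max}(Q)>0$ the extreme eigenvalues of $Q$; this lets me trade estimates in $\|\cdot\|_Q$ for estimates in $\|\cdot\|$ in both directions.

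The computational heart is condition (ii). Writing $\phi(x):=x^*Qx$ so that $V=\phi^{p/2}$, a direct differentiation gives $\nabla V(x)=p\,\phi(x)^{p/2-1}Qx$ and $\textsf{Hess}(V(x))=p(p-2)\phi(x)^{p/2-2}(Qx)(Qx)^*+p\,\phi(x)^{p/2-1}Q$. I would substitute these into the generator formula~\eqref{eq:lyapunov}, use $Q^*=Q$ together with the scalar identity $x^*B_j^*Qx=x^*QB_jx=\tfrac12 x^*(QB_j+B_j^*Q)x$, and factor out the common $\tfrac{p}{2}\phi(x)^{p/2-2}$. The remaining bracket then collapses \emph{exactly} to $-H(x)$ with $H$ as in~\eqref{cond:H} (this is where the coefficient $(2-p)/4$ is pinned down), yielding
\[
(\mathcal{L}V)(x)=-\tfrac{p}{2}\,\|x\|_Q^{p-4}\,H(x)\qquad\text{for }x\in\mathbb{R}^n\setminus\{0_\textsf{n}\}.
\]
Invoking the hypothesis $H(x)\ge c\|x\|^4$ and $\|x\|^4\ge\lambda_{\max}(Q)^{-2}\|x\|_Q^4$ gives $(\mathcal{L}V)(x)\le -c_3\|x\|_Q^p$ with $c_3:=pc/(2\lambda_{\max}(Q)^2)>0$, which is (ii).

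Conditions (iv) and (v) are routine from the explicit formulas above: Cauchy--Schwarz and norm equivalence bound $|\partial_{x_k}V(x)|$ by a constant times $\phi(x)^{p/2-1}\|x\|=\|x\|_Q^{p-2}\|x\|\lesssim\|x\|_Q^{p-1}$, and similarly each entry of $\textsf{Hess}(V(x))$ by a constant times $\|x\|_Q^{p-2}$. Theorem~\ref{th:tool} then yields GASiP. For the $p$-ES statement I would localize with the stopping times $\tau_m:=\inf\{t\ge0:\|X(t;x)\|\notin(1/m,m)\}$, apply It\^o's formula to $V(X(t\wedge\tau_m;x))$ (legitimate since $V\in\mathcal{C}^2$ away from $0_\textsf{n}$ and, by Lemma~5.3 in~\cite{Khasminskii}, $0_\textsf{n}$ is inaccessible from $x\neq0_\textsf{n}$), use (iv)--(v) and the linear growth of the coefficients to see the stochastic integral is a true martingale on $[0,t\wedge\tau_m]$, take expectations and let $m\to\infty$ by monotone/dominated convergence. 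This gives $\tfrac{\ud}{\ud t}\mathbb{E}[V(X(t;x))]\le -c_3\,\mathbb{E}[\|X(t;x)\|_Q^p]\le -\tfrac{c_3}{c_2}\mathbb{E}[V(X(t;x))]$, hence $\mathbb{E}[V(X(t;x))]\le V(x)e^{-(c_3/c_2)t}$ by Gr\"onwall, and (i) with norm equivalence converts this into $\mathbb{E}[\|X(t;x)\|^p]\le C\|x\|^pe^{-\gamma t}$ for suitable $C,\gamma>0$.

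The main obstacle is not a single hard estimate but the bookkeeping in the generator computation --- tracking the symmetrizations, the powers of $\phi$, and the sign of $(2-p)/4$ so that the bracket really does reduce to $-H(x)$ --- together with the standard but slightly delicate localization-and-limit argument needed to justify the moment inequality $\tfrac{\ud}{\ud t}\mathbb{E}[V(X(t;x))]\le-c_3\mathbb{E}[\|X(t;x)\|_Q^p]$ in the presence of the singularity of $V$ at the origin.
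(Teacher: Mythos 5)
Your proposal is correct and follows essentially the same route as the paper: the paper's proof also rests on the single identity $(\mathcal{L}V)(x)=-\tfrac{p}{2}\|x\|_Q^{p-4}H(x)$ for $V(x)=\|x\|_Q^p$, from which Item~(ii) of Theorem~\ref{th:tool} and hence GASiP follow, with the remaining verifications and the $p$-ES conclusion delegated to Lemma~4.2 of~\cite{HafsteinGudmundsson}. You simply fill in explicitly the details (conditions (i), (iii)--(v), the norm-equivalence constants, and the localization/Gr\"onwall argument for $p$-ES) that the paper handles by citation, and your computations check out.
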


The proof of Lemma~\ref{lem:lyapunov} is given in Subsection~\ref{sub:prueba} of Appendix~\ref{ap:proofmaintheorem}.

\begin{remark}[LMI and BMI problems]
\label{rem:lmip2}\hfill

\noindent
A short inspection of~\eqref{cond:H} reveals the following fact:
if for a particular $p=p^*>0$ we have $H(x) \geq c \|x\|^4$ for all $x\in\mathbb{R}^n$ in Lemma~\ref{lem:lyapunov}, then it also holds true for all $p>0$ fulfilling $p\le p^*$.

If $p=2$ in Lemma~\ref{lem:lyapunov} we have a simple linear matrix inequality condition (for shorthand LMI) that is equivalent to $H(x) \ge c \|x\|^2$, $x\in\mathbb{R}^n$.
Indeed, since $\|x\|_Q^2\ge \lambda_{\min}(Q)\|x\|^4$, where $\lambda_{\min}(Q)>0$ denotes the smallest eigenvalue of the positive and symmetric matrix $Q\in\mathsf{M}_n$, we
have that $H(x) \ge c \|x\|^4$, $x\in \mathbb{R}^n$ for some appropriate $c>0$, if and only if the LMI
\begin{align}\label{LMIeq}
Q \succeq \varepsilon I_n\quad  \textrm{and}\quad  A^* Q+  QA+ \sum_{j=1}^\ell B_j^* Q B_j \preceq -\varepsilon I_n
\end{align}
has a solution for some $\varepsilon >0$. LMIs can be solved algorithmically~\cite{Boyd,S98guide} and~\eqref{LMIeq} has a solution, if and only if, the null solution 
to~\eqref{eq:modelito} is mean-square stable, see 
Corollary~11.4.14 in~\cite{Arnold}.
Hence,  Lyapunov functions can be easily constructed for systems with a mean-square stable equilibrium.
 For this reason $2$-ES, often called exponential mean-square stability,
is often encountered in the literature and we discuss this further in the examples in
Section~\ref{sec:examples}. We stress that the null solution
of~\eqref{eq:modelito} is exponentially mean-square stable, if
and only if, the LMI~\eqref{LMIeq} has a solution.
The limitation of using $p=2$ is that many systems, of which the null solution is GASiP, are not exponentially mean-square stable, see also Remark~\ref{p2remark2}.

In general, i.e.,~for $p\neq 2$ and in particular for $0<p<2$, the conditions for $H(x) \ge c \|x\|^4$ can be written as a bilinear  matrix inequality (for shorthand BMI, see e.g.~\cite{VanAntwerp}), and this is discussed in detail in Section~\ref{sec:Mrc} below.
To the best of the authors knowledge, there are no algorithms known  to solve BMIs.  However, several different heuristics have been developed.  
In the examples in  Section~\ref{sec:examples} we use the heuristics from~\cite{KZM2018BMI1,KZM2018BMI2},
which  solves a sequence of LMIs in order to find a feasible solution to the BMI problem.
These LMIs are penalized convex relaxations of the original non-convex BMI problem and
convex relaxations rely on convex quadratic constraints.  This heuristic has been shown to be  effective in numerous benchmark problems; for a discussion of our implementation of this heuristic see~\cite[Chapter 4]{Bjarkason}.
\end{remark}

\subsection{\textbf{Main results and consequences}} \label{sec:Mrc}
\hfill

\noindent
In this subsection, we establish the main results and their consequences. We start with some preliminaries.

\begin{definition}[Bilinear Matrix Inequality (BMI)]\label{def:BMI}
\hfill

\noindent
Let $N\in \mathbb{N}$ and let
$A_{(i,j)},B_{(i)},C\in \mathsf{M}_N$ for $i,j=1,\ldots,M$ be symmetric matrices.
 A bilinear matrix inequality is defined as
\begin{equation}\label{eq:BMIP}
J:=\sum_{i=1}^{M}\sum_{j=1}^{M} q_iq_jA_{(i,j)}+\sum_{i=1}^{K} q_{i}B_{(i)}+C\succeq O_{N \times N}
\end{equation}
for some real numbers $(q_{i})_{i=1}^{\max\{K,M\}}$ called the variables of the BMI, where $O_{N \times N}$ denotes the zero matrix in $\mathsf{M}_N$.
\end{definition}

We note that the entries of the matrix $J$ defined in the left-hand side of~\eqref{eq:BMIP} are second order polynomials in the variables $q_1,\ldots,q_M$.
We point out that we can always choose $K=M$ by adding zeros, i.e., $q_j=0$ for $j\in \{\min\{K,M\}+1,\ldots,\max\{K,M\}\}$.

The following theorem is the main result of this manuscript.
Roughly speaking, it allows us to compute a symmetric and positive definite $Q\in \mathsf{M}_n$ for  Item~(ii) in Theorem~\ref{th:tool} as a BMI feasibility problem, such that  $V(x)=\|x\|^p_Q$, $x\in \mathbb{R}^n$, is a Lyapunov function for~\eqref{eq:modelito}.
The number $c>0$ and the entries of the symmetric and positive definite matrix $Q$ will be variables of the BMI problem~\eqref{eq:BMIP}.  For example, one can let $q_1,q_2,\ldots,q_{n(n+1)/2}$ be the components in the upper triangle of $Q$ and set $q_K:=c$.
In general, some auxiliary variables are needed in the BMI problem so $K>n(n+1)/2+1$, 
see~\eqref{eq:defK} in Appendix~\ref{ap:proofmaintheorem} for a possible choice of $K$.
\begin{theorem}[GASiP as a BMI problem]\label{th:stability}
\hfill

\noindent
The inequality~\eqref{eq:Hine} can be algorithmically rewritten as a BMI problem with variables $c$ and $Q$, see above.  A feasible solution to the BMI problem delivers a symmetric and positive definite $Q\in\mathsf{M}_n$ such that $V(x)=\|x\|_Q^p$, $x\in \mathbb{R}^n$, is a Lyapunov function
for the system~\eqref{eq:modelito}.
\end{theorem}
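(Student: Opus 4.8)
\medskip
\noindent\textbf{Plan of proof.}
The plan is to convert the pointwise quartic inequality \eqref{eq:Hine} into a semidefinite condition via a sum-of-squares (Gram-matrix) representation, and then to check that this condition is bilinear in the decision variables in the sense of Definition~\ref{def:BMI}. Fix $p>0$ and set $\nu:=n(n+1)/2$. Writing $S(Q):=-\bigl(A^*Q+QA+\sum_{j=1}^\ell B_j^*QB_j\bigr)$ and $R_j(Q):=QB_j+B_j^*Q$, the function in \eqref{cond:H} reads $H(x)=\bigl(x^*S(Q)x\bigr)\bigl(x^*Qx\bigr)+\tfrac{2-p}{4}\sum_{j=1}^\ell\bigl(x^*R_j(Q)x\bigr)^2$, so that
\[
P_{Q,c}(x):=H(x)-c\|x\|^4
\]
is a homogeneous quartic form in $x\in\mathbb R^n$ whose coefficients are homogeneous of degree exactly $2$ in the entries of $Q$, together with one term linear in $c$. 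This degree count --- $Q$ enters each of the two quadratic factors only to the first power, hence each product only to the second --- is the structural fact on which everything rests, and \eqref{eq:Hine} is exactly the nonnegativity of $P_{Q,c}$ on $\mathbb R^n$.

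Next I would pass to the sum-of-squares relaxation. Let $v(x)\in\mathbb R^{\nu}$ collect the degree-two monomials $x_ix_j$, $i\le j$. A homogeneous quartic $P$ is a sum of squares, hence nonnegative, whenever $P(x)=v(x)^*Gv(x)$ for some symmetric $G\succeq O_{\nu\times\nu}$; and because products of degree-two monomials obey linear relations, the affine set of Gram matrices representing a fixed $P$ has dimension $r:=\binom{\nu+1}{2}-\binom{n+3}{4}$, which I parametrize by auxiliary real variables $t=(t_1,\dots,t_r)$. Matching coefficients in $P_{Q,c}(x)=v(x)^*Gv(x)$ then produces, by a finite and explicit linear-algebra computation, symmetric matrices $G_0(Q,c)$ and $E_1,\dots,E_r$ with $G(Q,c,t)=G_0(Q,c)+\sum_{k=1}^r t_kE_k$; by the previous paragraph the entries of $G(Q,c,t)$ are polynomials of degree at most $2$ in the entries of $Q$ and of degree $1$ in $(c,t)$, the quadratic part being built only from products of pairs of entries of $Q$.

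To assemble the BMI, fix $\varepsilon>0$ and impose $G(Q,c,t)\succeq O_{\nu\times\nu}$, $Q\succeq\varepsilon I_n$ and $c\ge\varepsilon$; equivalently, that the block-diagonal matrix $J:=\mathrm{diag}\bigl(G(Q,c,t),\,Q-\varepsilon I_n,\,c-\varepsilon\bigr)$ be positive semidefinite. With the variables $q=(\mathrm{vech}(Q),c,t)$ --- of which only the $\nu$ entries of $Q$ occur quadratically, everything else, including the linear occurrence of $Q$ in the block $Q-\varepsilon I_n$, occurring only linearly --- the entries of $J$ are second-order polynomials in $q$, so $J\succeq O$ has exactly the form \eqref{eq:BMIP}: the matrices $A_{(i,j)}$ come from products of pairs of $Q$-entries in $G$, the $B_{(i)}$ from $c$, $t$ and the linear $Q$, and $C$ from the $-\varepsilon I_n$ and $-\varepsilon$ blocks. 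Conversely, a feasible point yields $Q\succeq\varepsilon I_n\succ O$, $c\ge\varepsilon>0$ and $G(Q,c,t)\succeq O_{\nu\times\nu}$, whence $P_{Q,c}$ is a sum of squares and in particular $H(x)\ge c\|x\|^4$ for all $x$; Lemma~\ref{lem:lyapunov} then gives that $V(x)=\|x\|_Q^p$ is a Lyapunov function for \eqref{eq:modelito} and that the null solution is $p$-ES.

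The step I expect to require the most care is the degree bookkeeping: one must be certain that after the Gram substitution the entries of $J$ stay at most \emph{quadratic}, and in fact \emph{bilinear} --- never cubic or quartic --- in the decision variables, which is precisely where the special shape of \eqref{cond:H}, with $Q$ to the first power inside each of two quadratic forms, is used. A second point worth recording --- not needed for the one-directional statement of the theorem --- is that the sum-of-squares relaxation is in general only \emph{sufficient} for nonnegativity of a quartic, but it is exact whenever $n\le 3$ by Hilbert's theorem on nonnegative ternary quartics (and binary forms), so it loses nothing in the examples of Section~\ref{sec:examples}. The only other subtlety, the passage from the strict conditions $Q\succ O$, $c>0$ of Lemma~\ref{lem:lyapunov} to the closed inequalities a BMI can express, is dealt with by the fixed margin $\varepsilon$.
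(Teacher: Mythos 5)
Your proposal is correct and follows essentially the same route as the paper: express $P_c(x)=H(x)-c\|x\|^4$ as a Gram form $z^*Pz$ in the degree-two monomials, observe that the coefficient-matching equations are quadratic in the entries of $Q$ and linear in $c$, and assemble the Gram positivity together with $Q\succeq\varepsilon I_n$ and $c\ge\varepsilon$ into one block-diagonal bilinear matrix inequality, with Lemma~\ref{lem:lyapunov} closing the argument. The only difference is cosmetic: you parametrize the affine family of Gram matrices by a basis of the kernel of the coefficient map (your $G_0+\sum_k t_kE_k$, with the correct dimension count $\binom{\nu+1}{2}-\binom{n+3}{4}$), whereas the paper introduces the underdetermined entries of $P$ as extra scalar variables and enforces the linear dependency relations through paired diagonal inequalities in $J$ --- the two bookkeeping schemes are equivalent.
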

The proof of Theorem~\ref{th:stability} is given in Subsection~\ref{sec:proofthnew} of Appendix~\ref{ap:proofmaintheorem}.
We emphasize that the matrices $A, B_1\ldots, B_{\ell}$ in~\eqref{eq:modelito} are time-independent.

\begin{remark}[One-dimensional Geometric Brownian motion]
\hfill

\noindent
The stability problem for the null solution for
one-dimensional systems ($n=1$) of the form~\eqref{eq:modelito} is straightforward and can be analyzed explicitly, see for instance Example~4.10 and Example~5.5 in~\cite{Mao}, and Remark~5.5 in~\cite{Khasminskii}.
The reason for this is that $1\times 1$ matrices commute, which renders the solution formula tractable.
\end{remark}

From here to the end of the manuscript, we always assume that $n\in \mathbb{N}\setminus\{1\}$.

\begin{remark}[Multivariate Geometric Brownian motion with specific structure]
\hfill

\noindent
For the case
$A:=\widetilde{A}+D$,
where $\widetilde{A}\in\mathsf{M}_n$ is  antisymmetric, $D\in\mathsf{M}_n$ is a diagonal matrix
and $B_j\in \textsf{Span}(I_n)$ for all $j\in \{1,\ldots,\ell\}$, it is not difficult to find explicit conditions for the null solution being GASiP, see for instance Example~6.2 in~\cite{Khasminskii}.
\end{remark}

\begin{remark}[The particular choice $p=2$ drops important terms in $H$] \label{p2remark2}
\hfill

\noindent
We note that for Hurwitz stable matrix $A$ and a symmetric and negative definite matrix $N$, there exists a unique symmetric and positive definite $Q$ such that  $A^*Q+  QA=N$, see Theorem~1, p.~443
in~\cite{Lancaster}. Therefore, the first term in the right-hand side of~\eqref{cond:H} is contracting for the dynamics~\eqref{eq:modelito}. However, in the  case  $-A$ is Hurwitz stable, the quadratic form
\[
x^*\left(A^* Q+  QA+ \sum_{j=1}^\ell B_j^* Q B_j\right)x
\]
is positive and hence this term is pushing away from zero. We observe that the second term of~\eqref{cond:H} has a contracting behavior for~\eqref{eq:modelito} when $p\in (0,2)$.

We point out that for a suitable $p>0$ the goal is to find a positive constant $c$ such that
\[
H(x)\geq c\|x\|^4_Q\quad \textrm{ for all }\quad x\in \mathbb{R}^n,
\]
which yields
\begin{equation}
(\mathcal{L}V)(x)\leq -\frac{p}{2}c V(x)\quad \textrm{ for all }\quad x\in \mathbb{R}^n.
\end{equation}
\end{remark}

\begin{remark}[Some properties of $H$]\label{rem:multihomog}
\hfill

\noindent
Using the multi-index notation, we observe that~\eqref{cond:H} can be written using real coefficients $q_\alpha$ as
\begin{equation}\label{eq:Hmulti}
H(x)=\sum\limits_{|\alpha|=4} q_\alpha x^\alpha\quad \textrm{ for all }\quad x\in \mathbb{R}^n.
\end{equation}
We also note that $H$ is a homogeneous polynomial of degree four, that is, $H(\lambda x)=\lambda^4 H(x)$ for all $\lambda \in \mathbb{R}$ and $x\in \mathbb{R}^n$.
In addition, the coefficients $q_{\alpha}$ are  second order polynomials with respect to the coefficients of $Q$ and they can be  computed explicitly from the coefficients of $A$ and $B_1,\ldots,B_{\ell}$. The latter is straightforward by computer software.
\end{remark}

\begin{remark}[$P_c$ as a sum of squared  polynomials]
\hfill

\noindent
We observe that~\eqref{eq:Hine} can be written using a multivariate polynomial in the variables $x=(x_1,\ldots,x_n)$ as follows
\begin{equation}\label{eq:Pc}
P_c(x)=P_{c}(x_1,\ldots,x_n):=H(x_1,\ldots,x_n)-c\left(\sum\limits_{j=1}^{n}x^2_j\right)^2\geq 0.
\end{equation}
In general, a positive multivariate polynomial
cannot be decomposed as a sum of suitable squared polynomials,
see Hilbert's seventeenth problem~\cite{Hilbert, Reznick1,Reznick2}  and the monographs~\cite{Marshall,Powers} for further details about theory and practice of positivity of real polynomials.  The exceptions are when a multivariate polynomial is homogeneous of order two or if $n=3$ and it is homogenous of
order four.
However, since it is a very hard problem to determine if a polynomial is positive, the sum of squared polynomials relaxation is often used in practice and usually works
very well.
We will show that we can algorithmically write down a BMI feasibility problem for~\eqref{eq:Pc}, such that $P_c$ can be written as a sum of suitable squared polynomials from its solution.
To be more precise, our goal is to \textbf{find} for an a\,priori fixed $\varepsilon>0$
\begin{itemize}
\item[(a)]
a constant $c\ge \varepsilon$,
\item[(b)] a positive definite symmetric matrix $Q\in \mathsf{M}_n$, $Q \succeq \varepsilon I_n$
\item[(c)] and a symmetric and positive semi-definite matrix $P\in \mathsf{M}_m$
\end{itemize}
such that
\begin{equation}\label{eq:Pz}
P_c(x)=z^*Pz \quad \textrm{ for all }\quad z\in \mathbb{R}^m,
\end{equation}
where $m:=n(n+1)/2$ and
\begin{equation}\label{def:z}
z:=(x^2_1,x_1x_2,x_1x_3,\ldots, x_1x_{n},x^2_2,x_2x_3,\ldots,x_2x_n,x^2_3, x_3x_4,\ldots,x_3x_n,\ldots,x^2_{n-1}, x_{n-1}x_n, x^2_n)^*.
\end{equation}
Note that because the conditions $>0$ and $\succ O_{n \times n}$ cannot be implemented in a semi-definite optimization problem, we need a small constant $\varepsilon>0$ and implement
$\ge \varepsilon$ and $\succeq \varepsilon I_n$ instead.
Further, note that the entries of the vector $z$ consists of second order monomials, hence generically $z^*Pz$ is a multivariate polynomial of fourth order.
Since the $P$ in~\eqref{eq:Pz} is a symmetric and positive semi-definite matrix, all its eigenvalues $d_1,\ldots,d_m$ are non-negative.
Then the celebrated spectral theorem yields
\[
P_c(x)=\sum_{j=1}^{m} (\sqrt{d_j}[Oz]_j)^2,
\]
where $O\in \mathsf{M}_m$ is an orthogonal matrix and $Oz=([Oz]_1,\ldots,[Oz]_m)^*$, i.e., $[Oz]_k$ denotes the $k$-th coordinate of $Oz$ with respect to the canonical basis $(e_j)_{1\leq j\leq m}$.
\end{remark}

\begin{remark}[The computation of the entries for the matrix $P$]\label{rem:compuentries}
\hfill

\noindent
By~\eqref{eq:Pz} we have that
\begin{equation}\label{eq:Pcrelation}
P_c(x_1,\ldots,x_n)=z^*Pz\quad \textrm{ for all }\quad x\in \mathbb{R}^n,
\end{equation}
where $z=z(x_1,\ldots,x_n)\in \mathbb{R}^m$ is defined in~\eqref{def:z}. We compute all  fourth order  partial derivatives on both sides of~\eqref{eq:Pcrelation} and obtain linear relations between the coefficients of the matrix $P$ and the coefficients of the polynomial $P_c$ given in~\eqref{eq:Pc}.
We remark that such linear relations do not determine the elements of $Q$ uniquely, only some linear relations they must fulfill.
Further,  note that the coefficients of $P$ depend on $c$ and the coefficients of $Q$.
\end{remark}

\subsection{\textbf{Construction of the BMI problem for $n=2$ and $\ell=1$}}
\hfill

\noindent
In other to be clear in the presentation, we give the details when $n=2$, ($m=3$) and $\ell=1$.
The general case is given in detail in the proof of Theorem~\ref{th:stability}.

For $n=2$ (so $m=3$), $\ell=1$ the coefficients of the symmetric matrix $P=(P_{i,j})_{i,j\in \{1,2,3\}}$ given  $Q=(Q_{i,j})_{i,j\in \{1,2\}}$ and $c$, and  $A=(A_{i,j})_{i,j\in \{1,2\}}$, $B=(B_{i,j})_{i,j\in \{1,2\}}$ in~\eqref{eq:modelito},
satisfy~\eqref{eq:Pcrelation}, that is,
\begin{equation}\label{eq:defPcdos}
\begin{split}
P_c(x_1,x_2)&=
\begin{pmatrix}
x^2_1 & x_1x_2 & x^2_2
\end{pmatrix}
\begin{pmatrix}
P_{1,1} & P_{1,2} & P_{1,3}\\
P_{2,1} & P_{2,2} & P_{2,3}\\
P_{3,1} & P_{3,2} & P_{3,3}
\end{pmatrix}
\begin{pmatrix}
x^2_1\\
x_1x_2\\
x^2_2
\end{pmatrix}\\
&=\begin{pmatrix}
x^2_1 & x_1x_2 & x^2_2
\end{pmatrix}
\begin{pmatrix}
P_{1,1} & P_{1,2} & P_{1,3}\\
P_{1,2} & P_{2,2} & P_{2,3}\\
P_{1,3} & P_{2,3} & P_{3,3}
\end{pmatrix}
\begin{pmatrix}
x^2_1\\
x_1x_2\\
x^2_2
\end{pmatrix}\\
&=P_{1,1}x^4_1+2P_{1,2}x^3_1x_2+(2P_{1,3}+P_{2,2})x^2_1x^2_2
+2P_{2,3}x_1x^3_2+P_{3,3}x^4_2
\end{split}
\end{equation}
for all $(x_1,x_2)\in \mathbb{R}^2$.
On the other hand, by~\eqref{cond:H}
and~\eqref{eq:Pc} we have
\begin{equation}\label{eq:Pctwo}
P_c(x_1,x_2)=u_1x^4_1+u_2x^3_1x_2+u_3x^2_1x^2_2
+u_4x_1x^3_2+u_5x^4_2
-c(x^2_1+x^2_2)^2
\end{equation}
for all $(x_1,x_2)\in \mathbb{R}^2$, where the coefficients $u_1,u_2,u_3,u_4,u_5$ can be computed explicitly and they are functions of the coefficients of $Q$, and implicitly of the coefficients of $A$, $B$ and the constants $c$ and $p$.
By~\eqref{eq:defPcdos} we have
\begin{equation}
\begin{split}
\frac{\partial^4 P_c(x_1,x_2)}{\partial x^4_1}&=24P_{1,1},\\
\frac{\partial^4 P_c(x_1,x_2)}{\partial x^3_1x_2}&=12P_{1,2},\\
\frac{\partial^4 P_c(x_1,x_2)}{\partial x^2_1x^2_2}&=8P_{1,3} + 4P_{2,2},\\
\frac{\partial^4 P_c(x_1,x_2)}{\partial x_1x^3_2}&=12P_{2,3},\\
\frac{\partial^4 P_c(x_1,x_2)}{\partial x^4_2}&=24P_{3,3}.
\end{split}
\end{equation}
By computing the fourth order partial derivatives
of~\eqref{eq:defPcdos} and~\eqref{eq:Pctwo}   we obtain that
\begin{equation}
\begin{split}
P_{1,1}, P_{1,2}, 2P_{1,3}+P_{2,2}, P_{2,3}, P_{3,3}&=\textrm{ explicit linear combination of }\\
&\qquad Q^2_{1,1}, Q_{1,1}Q_{1,2}, Q_{1,1}Q_{2,2}, Q^
2_{1,2}, Q_{1,2}Q_{2,2}, Q^2_{2,2} \textrm{ and }  c,
\end{split}
\end{equation}
see p.~27 and 28 in~\cite{Bjarkason} or
Section~4 for the linear oscillator in \cite{HafsteinBMI}.
More precisely,
\begin{equation}\label{eq:defP11}
\begin{split}
P_{1,1}&=(-2A_{1,1}+(1-p)B^2_{1,1})Q^2_{1,1}\\
&\quad + (2B_{1,1}B_{2,1}(1-p)-2A_{2,1})Q_{1,1}Q_{1,2}\\
&\quad +(-B^2_{2,1})Q_{1,1}Q_{2,2}\\
&\quad+(B^2_{2,1}(2-p))Q^2_{1,2}\\
&\quad +(0) Q_{1,2}Q_{2,2}\\
&\quad+(0)Q^2_{22}\\
&\quad +
(-1)c,
\end{split}
\end{equation}
\begin{equation}\label{eq:defP12}
\begin{split}
P_{1,2}&=(-2A_{1,2}+B_{1,1}B_{1,2}(1-p))Q^2_{1,1}\\
&\quad+ (-3A_{1,1}-A_{2,2}-B^2_{1,1}-B_{1,1}B_{2,2}\\
&\qquad\quad + B_{1,2}B_{2,1}(1-p)+B_{1,1}(B_{1,1} + B_{2,2})(2-p))Q_{1,1}Q_{1,2}\\
&\quad+(-A_{2,1}-B_{2,1}B_{2,2}+B_{1,1}B_{2,1}(2-p))Q_{1,1}Q_{2,2}\\
&\quad+(-2A_{2,1}-2B_{1,1}B_{2,1}+B_{2,1}(B_{1,1} + B_{2,2})( 2-p))Q^2_{1,2}\\
&\quad +(B^2_{2,1}(1 - p)) Q_{1,2}Q_{2,2}\\
&\quad+(0)Q^2_{22}\\
&\quad +
(0)c,
\end{split}
\end{equation}
\begin{equation}
\begin{split}\label{eq:defP13P22}
2P_{1,3} + P_{2,2}&=(B_{1,2}^2(1-p))Q^2_{1,1}\\
&\quad+ (-6A_{1,2}+4B_{1,1}B_{1,2}(1-p)+2B_{1,2}B_{2,2}(1-p)
)Q_{1,1}Q_{1,2}\\
&\quad+(-2A_{1,1} - 2A_{2,2} - B_{1,1}^2 - B_{2,2}^2\\
&\qquad \quad
+2B_{1,1}B_{2,2}(2-p) + 2B_{1,2}B_{2,1}(2-p))Q_{1,1}Q_{2,2}\\
&\quad+(-4A_{1,1} - 4A_{2,2} + (2-p)(B_{1,1} + B_{2,2})^2 - 4B_{1,1}B_{2,2} -2p B_{1,2}B_{2,1})Q^2_{1,2}\\
&\quad +(-6A_{2,1}+2B_{1,1}B_{2,1}(1-p) + 4B_{2,1}B_{2,2}(1-p)
) Q_{1,2}Q_{2,2}\\
&\quad+(B_{2,1}^2(1-p))Q^2_{22}\\
&\quad +
(-2)c,
\end{split}
\end{equation}
\begin{equation}\label{eq:defP23}
\begin{split}
P_{2,3}&=(0)Q^2_{1,1}\\
&\quad+ (B_{1,2}^2(1-p))Q_{1,1}Q_{1,2}\\
&\quad+(-A_{1,2}-B_{1,1}B_{1,2}+B_{1,2}B_{2,2}(2-p))Q_{1,1}Q_{2,2}\\
&\quad+(-2A_{1,2}-2B_{1,2}B_{2,2}+B_{1,2}(B_{1,1} + B_{2,2})(2-p))Q^2_{1,2}\\
&\quad +(-A_{1,1}-3A_{2,2}-B_{2,2}^2-B_{1,1}B_{2,2}\\
&\qquad\quad
+B_{1,2}B_{2,1}(1-p)+B_{2,2}(B_{1,1} + B_{2,2})(2-p)) Q_{1,2}Q_{2,2}\\
&\quad+(-A_{2,1} + B_{2,1}B_{2,2}(1-p))Q^2_{2,2}\\
&\quad +
(0)c,
\end{split}
\end{equation}
and finally
\begin{equation}\label{eq:defP33}
\begin{split}
P_{3,3}&=(0)Q^2_{1,1}\\
&\quad+ (0)Q_{1,1}Q_{1,2}\\
&\quad+(-B_{1,2}^2)Q_{1,1}Q_{2,2}\\
&\quad+(B_{1,2}^2(2-p))Q^2_{1,2}\\
&\quad +(2B_{1,2}B_{2,2}(1-p)-2A_{1,2}) Q_{1,2}Q_{2,2}\\
&\quad+(-2A_{2,2}+B_{2,2}^2(1-p))Q^2_{22}\\
&\quad +
(-1)c.
\end{split}
\end{equation}
We note that the coefficients $P_{1,3}$ and $P_{2,2}$ are not uniquely determined. They only satisfy the linear dependency relation
\begin{equation}\label{eq:2p}
\begin{split}
2P_{1,3}+P_{2,2}&=\alpha_1Q^2_{1,1}+\alpha_{2}Q_{1,1}Q_{1,2}+\alpha_3 Q_{1,1}Q_{2,2}\\
&\quad+\alpha_4 Q^2_{1,2}+\alpha_5 Q_{1,2}Q_{2,2}+\alpha_6 Q^2_{2,2}+\alpha_7 c
\end{split}
\end{equation}
for some explicit coefficients $\alpha_1,\ldots,\alpha_7$ defined in~\eqref{eq:defP23}. Since the final goal is to find a positive definite matrix $Q$ such that~\eqref{cond:H} is positive for all $x\in \mathbb{R}^2$,
 we stress that~\eqref{eq:2p} provides us flexibility for finding such matrix $Q$.
We now find real numbers $q_1,q_2,q_3,q_4,q_5,q_6$ and matrices $A_{(i,j)},B_{(i)},C\in \mathbb{R}^{8\times 8}$
such that
\begin{equation}\label{eq:BMIdim2}
\sum_{i=1}^{3}\sum_{j=i}^{3} q_iq_jA_{(i,j)}
+\sum_{i=1}^{6} q_i B_{(i)}+C\succeq O_{8\times 8}.
\end{equation}
We start with the assignment to the variables $(q_j)_{j\in \{1,2,3,4,5,6\}}$. We set
\begin{equation}\label{defn:qs}
q_1:=Q_{1,1},\quad q_2:=Q_{1,2},\quad q_3:=Q_{2,2}
\quad \textrm{ and } \quad q_6=c.
\end{equation}
In addition, we only have one dependent relation~\eqref{eq:2p} and hence we assign
\begin{equation}\label{defn:otqs}
q_4:=P_{1,3}\quad \textrm{ and }\quad q_5:=P_{2,2}.
\end{equation}

For each $i,j\in \{1,2,\ldots,8\}$ let $E_{(i,j)}\in \mathsf{M}_8$ such that all entries of $E_{(i,j)}$ are equal to zero except in the
$(i,j)$ and $(j,i)$ entries which are equal to one.
We first observe that
for each $i,j\in \{1,2,\ldots,8\}$
the matrix $A_{(i,j)}$ is associated to the variable product $q_iq_j$ in~\eqref{eq:BMIdim2}. Therefore, the entries of $A_{(i,j)}$ correspond to the coefficients of $q_iq_j$
in~\eqref{eq:defP11},~\eqref{eq:defP12},~\eqref{eq:defP13P22},~\eqref{eq:defP23} and~\eqref{eq:defP33}.
In the sequel, we define $A_{(1,1)}$.
Observe that $A_{(1,1)}$ is associated to the coefficient $q_1q_1=Q^2_{1,1}$.
Then we collect all the coefficients of $q_1q_1$ in
in~\eqref{eq:defP11},~\eqref{eq:defP12},~\eqref{eq:defP13P22},~\eqref{eq:defP23} and~\eqref{eq:defP33}.
The non-dependent relations, that is $P_{1,1},P_{1,2},P_{2,3},P_{3,3}$ are naturally implemented in $E_{(1,1)}, E_{(1,2)},E_{(2,3)}, E_{(3,3)}$, while the dependent relation is implemented in the diagonal of $E_{(4,4)}$ and $E_{(5,5)}$.
We remark that in order to implement the equality~\eqref{eq:defP13P22} for a dependent relation, we need to implement two inequalities $2P_{1,3} + P_{2,2}-B_{1,2}^2(1-p)\geq 0$ and $2P_{1,3} + P_{2,2}-B_{1,2}^2(1-p)\leq 0$.
Hence, we have

\begin{equation}
\begin{split}
A_{(1,1)}:&=(-2A_{1,1}+(1-p)B^2_{1,1})E_{(1,1)}\\
&\quad+(-2A_{1,2}+B_{1,1}B_{1,2}(1-p))E_{(1,2)}\\
&\quad+(B_{1,2}^2(1-p))(E_{(4,4)}-E_{(5,5)})\\
&\quad +(0)E_{(2,3)}\\
&\quad +(0)E_{(3,3)}.
\end{split}
\end{equation}

Similarly, the matrices
$A_{(1,2)}$, $A_{(1,3)}$, $A_{(2,2)}$, $A_{(2,3)}$ and $A_{(3,3)}$ are defined as follows
\begin{equation}
\begin{split}
A_{(1,2)}:&=(2B_{1,1}B_{2,1}(1-p)-2A_{2,1})E_{(1,1)}\\
&\quad+(-3A_{1,1}-A_{2,2}-B^2_{1,1}-B_{1,1}B_{2,2}-B_{1,2}B_{2,1}\\
&\qquad\quad - B_{1,2}B_{2,1}(p-2)-B_{1,1}(B_{1,1} + B_{2,2})(p -2))E_{(1,2)}\\
&\quad+(-6A_{1,2}+4B_{1,1}B_{1,2}(1-p)+2B_{1,2}B_{2,2}(1-p))(E_{(4,4)}-E_{(5,5)})\\
&\quad +(B_{1,2}^2(1-p))E_{(2,3)}\\
&\quad +(0)E_{(3,3)},
\end{split}
\end{equation}

\begin{equation}
\begin{split}
A_{(1,3)}:&=(-B^2_{2,1})E_{(1,1)}\\
&\quad+(-A_{2,1}-B_{2,1}B_{2,2}-B_{1,1}B_{2,1}(p-2))E_{(1,2)}\\
&\quad+(-2A_{1,1} - 2A_{2,2} - B_{1,1}^2 - B_{2,2}^2\\
&\qquad \quad
-2B_{1,1}B_{2,2}(p - 2)
- 2B_{1,2}B_{2,1}(p - 2))(E_{(4,4)}-E_{(5,5)})\\
&\quad +(-A_{1,2}-B_{1,1}B_{1,2}-B_{1,2}B_{2,2}(p - 2))E_{(2,3)}\\
&\quad +(-B_{1,2}^2)E_{(3,3)},
\end{split}
\end{equation}

\begin{equation}
\begin{split}
A_{(2,2)}:&=(B^2_{2,1}(2-p))E_{(1,1)}\\
&\quad+(-2A_{2,1}-2B_{1,1}B_{2,1}-B_{2,1}(B_{1,1} + B_{2,2})(p - 2))E_{(1,2)}\\
&\quad+(-4A_{1,1} - 4A_{2,2} + (2-p)(B_{1,1} + B_{2,2})^2\\
&\qquad \quad
- 4B_{1,1}B_{2,2} -2p B_{1,2}B_{2,1})(E_{(4,4)}-E_{(5,5)})\\
&\quad +(-2A_{1,2}-2B_{1,2}B_{2,2}-B_{1,2}(B_{1,1} + B_{2,2})(p-2))E_{(2,3)}\\
&\quad +(B_{1,2}^2(2-p))E_{(3,3)},
\end{split}
\end{equation}

\begin{equation}
\begin{split}
A_{(2,3)}:&=(0)E_{(1,1)}\\
&\quad+(B^2_{2,1}(1 - p))E_{(1,2)}\\
&\quad+(-6A_{2,1}+2B_{1,1}B_{2,1}(1-p) + 4B_{2,1}B_{2,2}(1-p))(E_{(4,4)}-E_{(5,5)})\\
&\quad +(-A_{1,1}-3A_{2,2}-B_{2,2}^2-B_{1,1}B_{2,2}\\
&\qquad\quad
+B_{1,2}B_{2,1}(1-p)+B_{2,2}(B_{1,1} + B_{2,2})(2-p))E_{(2,3)}\\
&\quad +(2B_{1,2}B_{2,2}(1-p)-2A_{1,2})E_{(3,3)},
\end{split}
\end{equation}
and finally,
\begin{equation}
\begin{split}
A_{(3,3)}:&=(0)E_{(1,1)}\\
&\quad+(0)E_{(1,2)}\\
&\quad+(-A_{2,1} + B_{2,1}B_{2,2}(1-p))(E_{(4,4)}-E_{(5,5)})\\
&\quad +(-A_{2,1} + B_{2,1}B_{2,2}(1-p))E_{(2,3)}\\
&\quad +(-2A_{2,2}+B_{2,2}^2(1-p))E_{(3,3)}.
\end{split}
\end{equation}

In what follows, we construct the matrices $B_{(1)}$, $B_{(2)}$, $B_{(3)}$, $B_{(4)}$, $B_{(5)}$ and $B_{(6)}$, that is, the
implementation of the matrix $Q$.
Recall the definition of the variables  $q_1$, $q_2$ and $q_3$ given
in~\eqref{defn:qs}. In the sequel, we implement the variables $q_1$, $q_2$ and $q_3$ in the following positions
\[
B_{(1)}:=E_{(6,6)},\quad B_{(2)}:=E_{(6,7)}\quad \textrm{ and }\quad
B_{(3)}:=E_{(7,7)}.
\]
Now, recall~\eqref{defn:otqs}, that is, the definition of the variables $q_4$ and $q_5$ corresponding to the dependent relation~\eqref{eq:2p}. Since a relation of real numbers of the type $u=v$ is implemented as $u-v\geq 0$ and $v-u\geq 0$,
we implement them as follows
\[
B_{(4)}:=E_{(1,3)}+2(E_{(5,5)}-E_{(4,4)})\quad \textrm{ and }\quad
B_{(5)}:=E_{(2,2)}+(E_{(5,5)}-E_{(4,4)}).
\]
The remainder variable $q_6=c$ given in~\eqref{defn:qs} is implemented in $B_6$ as follows
\[
B_{(6)}:=-E_{(1,1)}-2(E_{(4,4)}-E_{(5,5)})-E_{(3,3)}+E_{(8,8)}.
\]

Finally, we construct the matrix $C$.
Recall that $I_2$ denotes the identity in $\textsf{M}_2$.
In the matrix $C$, the $\varepsilon$-condition on the symmetric matrix $Q-\varepsilon I_2$ being non-negative definite and $c-\varepsilon\geq 0$ are implemented as follows
\[
C:=-\varepsilon(E_{(6,6)}+E_{(7,7)}+E_{(8,8)}).
\]

\section{\textbf{Examples and simulations}}\label{sec:examples}\hfill

\noindent
In this section, we exemplify our method for specific models from the literature in dimension two and three. More precisely, in each example, we provide numerical expressions for a matrix $Q$ defining a Lyapunov function for a particular linear system and we also specify the constants  $p$ and
$\varepsilon$ used in the corresponding BMI problem~\eqref{eq:BMIP} or the corresponding LMI problem~\eqref{LMIeq}.
The variable $c$ might be useful in the future, if an objective function is to be minimized. However, in all our examples only the feasibility is of interest and therefore  $c\ge \varepsilon$ translates to $c=\varepsilon$ in all our solutions.
Recall, that we solve  BMI problems with the heuristic from~\cite{KZM2018BMI1,KZM2018BMI2}, which iteratively solves a series of auxiliary  LMI problems.
For setting up LMI problems in Matlab there are several possibilities, e.g.~CVX~\cite{cvx},
YALMIP~\cite{yalmip}, or the LMI Matlab toolbox.  Further, several different solvers can be used, e.g.~MOSEK~\cite{mosek}, SeDuMI~\cite{S98guide} or SDPT3~\cite{sdpt3}.  In our examples we use CVX with SDPT3.

\subsection{\textbf{Dimension two}}\label{sec:dimtwo}
\hfill

\noindent
In this subsection, we study two dimensional systems such as
random linear oscillators, linear satellite dynamics, two inertia system and geometric Brownian motion on the circle.

\subsubsection{\textbf{Random linear oscillator}}\label{sec:rlo}
\hfill

\noindent
Random (noisy) linear oscillators are perhaps the simplest toy models used as a prototype for different phenomena in nature,
see~\cite{GittermanMulti,Gitterman}.
In the sequel, we analyze the stability of a random linear oscillator with multiplicative noise.
Consider the following second order deterministic differential equation (deterministic oscillator)
\[
\ddot{x}(t)+\gamma \dot{x}(t)+
\kappa x(t)=0 \quad \textrm{ with initial data }\quad x(0)=Q_0\quad \textrm{ and }\quad \dot{x}(0)=P_0,
\]
where $\gamma$ and $\kappa$ are parameters that represent the damping and the intensity of the force, respectively.
Assume that the fluctuations of the parameters $\gamma$ and $\kappa$ are modeled by two independent Brownian motions $W_1$ and $W_2$, that is by
$\gamma+\sigma_1 \ud W_1$ and $\kappa+\sigma_2 \ud W_2$, where $\sigma_1$ and $\sigma_2$ are non-zero real numbers.
Using the position-momentum variables $((Q(t;x),P(t;x))^*)_{t\geq 0}$, the latter yields the linear SDE
\begin{equation}
\begin{split}
\ud
\begin{pmatrix}
Q(t;x)\\
P(t;x)
\end{pmatrix}&=
A
\begin{pmatrix}
Q(t;x)\\
P(t;x)
\end{pmatrix}\ud t\\
&\qquad+
B_1
\begin{pmatrix}
Q(t;x)\\
P(t;x)
\end{pmatrix} \ud W_1(t)
+B_2
\begin{pmatrix}
Q(t;x)\\
P(t;x)
\end{pmatrix} \ud W_2(t), \quad t\geq 0
\end{split}
\end{equation}
with initial condition $Q(0;x)=Q_0$ and $P(0;x)=P_0$,
where $x=(Q_0,P_0)^*$,
\[
A:=
\begin{pmatrix}
0 & 1 \\
-\kappa & -\gamma
\end{pmatrix},
\quad
B_1:=
\begin{pmatrix}
0 & 0\\
0 & -\sigma_1
\end{pmatrix}
\quad \textrm{ and }\quad
B_2:=\begin{pmatrix}
0 & 0\\
-\sigma_2 & 0
\end{pmatrix}.
\]
For the values
$\kappa=1$, $\gamma=0.2$, $\sigma_1=0.3$, and $\sigma_2=0.5$
 and with $p=2$, $\varepsilon=0.01$ the LMI
 problem~\eqref{LMIeq} has the solution
\begin{equation}
Q=
\begin{pmatrix}
8.593474800605289  & 1.240664128315077\\
1.240664128315077 &  8.369114515122694
\end{pmatrix},
\end{equation}
which asserts exponential mean-square stability of the origin.
For the values
$\kappa=1$, $\gamma=0.1$, $\sigma_1=0.3$, and $\sigma_2=0.5$, i.e.~the damping has been reduced by factor two,  and with $p=2$,  the LMI
problem~\eqref{LMIeq} does not have a solution and the null solution is not exponentially mean-square stable, see Remark~\ref{rem:lmip2}.  However, with $p=0.1$ and  $\varepsilon=0.01$ the BMI
problem~\eqref{eq:BMIP} has
a solution
\begin{equation}
Q=
\begin{pmatrix}
0.809364631343529  & 0.100867430488615\\
0.100867430488615 &  0.773229253083311
\end{pmatrix},
\end{equation}
and the origin is $0.1$-ES stable, and thus GASiP.

\subsubsection{\textbf{Satellite dynamics}}\label{sec:satelite}
\hfill

\noindent
In this subsection, we analyze the stability of linearized satellite dynamics, see~\cite{Sagirow1970} and Chapter~5 in~\cite{Burragethesis}.
It is a simplified version of the non-linear satellite dynamics
which arise in the study of the influence of a rapidly fluctuating density of the Earth's atmosphere on the motion of a satellite in a circular orbit~\cite{Sagirow1970}.
To be more precise, the non-linear satellite dynamics are modeled by the second order deterministic differential equation
\[
\ddot{x}(t)+\gamma \dot{x}(t)+
\kappa x(t)=\alpha \sin(2x) \quad \textrm{ with initial data }\quad x(0)=Q_0\quad \textrm{ and }\quad \dot{x}(0)=P_0,
\]
where $\gamma$, $\kappa$ and $\alpha$ are parameters.
Analogously to the Subsection~\ref{sec:rlo} we assume that the fluctuations of the parameters $\gamma$ and $\kappa$ are modeled by two independent Brownian motions $W_1$ and $W_2$, that is by
$\gamma+\sigma_1 \ud W_1$ and $\kappa+\sigma_2 \ud W_2$ where $\sigma_1$ and $\sigma_2$ are non-zero real numbers.
Note that in the position-momentum variables $((Q(t;x),P(t;x))^*)_{t\geq 0}$, the point $(0,0)^*$ is a fixed point of the non-linear dynamics. Therefore, we consider a linearization around the fixed point $(0,0)^*$ and obtain the following linear SDE in the linearized variables
$((Q^{\textrm{Lin}}(t;x),P^{\textrm{Lin}}(t;x))^*)_{t\geq 0}$
\begin{equation}
\begin{split}
\ud
\begin{pmatrix}
Q^{\textrm{Lin}}(t;x)\\
P^{\textrm{Lin}}(t;x)
\end{pmatrix}&=
A
\begin{pmatrix}
Q^{\textrm{Lin}}(t;x)\\
P^{\textrm{Lin}}(t;x)
\end{pmatrix}\ud t\\
&\qquad+
B_1
\begin{pmatrix}
Q^{\textrm{Lin}}(t;x)\\
P^{\textrm{Lin}}(t;x)
\end{pmatrix} \ud W_1(t)
+B_2
\begin{pmatrix}
Q^{\textrm{Lin}}(t;x)\\
P^{\textrm{Lin}}(t;x)
\end{pmatrix} \ud W_2(t), \quad t\geq 0
\end{split}
\end{equation}
with initial condition $Q^{\textrm{Lin}}(0;x)=Q_0$ and $P^{\textrm{Lin}}(0;x)=P_0$,
where $x=(Q_0,P_0)^*$,
\[
A:=
\begin{pmatrix}
0 & 1 \\
-\kappa+2\alpha & -\gamma
\end{pmatrix},
\quad
B_1:=
\begin{pmatrix}
0 & 0\\
0 & -\sigma_1
\end{pmatrix}
\quad \textrm{ and }\quad
B_2:=\begin{pmatrix}
0 & 0\\
-\sigma_2 & 0
\end{pmatrix}.
\]
For the values
$\kappa=1$, $\gamma=0.2$, $\alpha=0.2$, $\sigma_1=0.3$ and $\sigma_2=0.5$ we obtain a solution to the BMI problem~\eqref{eq:BMIP} with $p=0.1$ and $\varepsilon=0.01$;
\begin{equation}
Q=
\begin{pmatrix}
0.470134826272870 &  0.153025027071006\\
0.153025027071006 &  0.675335825647526
\end{pmatrix}.
\end{equation}
With $p=2$  the LMI problem~\eqref{LMIeq} does not have a solution for these parameter values, hence, the origin is $0.1$-ES and GASiP, but not exponentially mean-square stable.
Note that LMIs are exactly solvable and that, unless numerical troubles are encountered, the solver either finds a feasible solution or reports that there do not exist any feasible solutions.

\subsubsection{\textbf{Two-inertia systems}}\label{sec:inertia}
\hfill

\noindent
In this subsection, we analyze a two-inertia linear system, which is a mechanical system driven by electrical motors, see~\cite{Hutwo}. It is described as the solution of
the linear SDE
\begin{equation}
\begin{split}
\ud
\begin{pmatrix}
\theta_m(t;\theta)\\
\theta_L(t;\theta)
\end{pmatrix}&=
A
\begin{pmatrix}
\theta_m(t;\theta)\\
\theta_L(t;\theta)
\end{pmatrix}\ud t\\
&\qquad+
B_1
\begin{pmatrix}
\theta_m(t;\theta)\\
\theta_L(t;\theta)
\end{pmatrix} \ud W_1(t)
+B_2
\begin{pmatrix}
\theta_m(t;\theta)\\
\theta_L(t;\theta)
\end{pmatrix} \ud W_2(t), \quad t\geq 0
\end{split}
\end{equation}
with initial condition $\theta_m(0;0)=\theta_L(0;0)=\theta$, where $\rho$ is a parameter,
\[
A:=
\begin{pmatrix}
-\frac{\rho}{2} & \frac{\rho}{2} \\
\frac{\rho}{2} & -\frac{\rho}{2}
\end{pmatrix},
\quad
B_1:=
\begin{pmatrix}
\sigma_1 & 0\\
0 & 0
\end{pmatrix}
\quad \textrm{ and }\quad
B_2:=\begin{pmatrix}
0 & 0\\
0 & \sigma_2
\end{pmatrix}.
\]
For the values
$\rho=2$, $\sigma_1=0.3$, and $\sigma_2=0.5$ we obtain with $\varepsilon=0.01$  $p$-ES stability with $p=0.1$; the matrix in the solution is
\begin{equation}
Q=
\begin{pmatrix}
0.456131392079160 &  0.133662106581867\\
 0.133662106581867  & 0.448403918451280
\end{pmatrix}.
\end{equation}
For $p=2$ the LMI problem~\eqref{LMIeq} does not yield a solution and therefore the origin is not exponentially mean-square stable, although it is GASiP.

\subsubsection{\textbf{Diagonal noise system}}\label{sec:diagonal}
\hfill

\noindent
In this subsection, we analyze independent diagonal noises for a test system from~
Section~3.1 in~\cite{Senosiain}. More precisely, the unique strong solution of the following linear SDE in $\mathbb{R}^2$
\begin{equation}\label{eq:idnoi}
\ud X(t;x) = AX(t;x)\ud t+
\sum_{j=1}^\ell B_j X(t;x)\ud W_j(t),\quad t\geq 0,
\end{equation}
with initial state $X(0;x)=x\in \mathbb{R}^2$,
parameters $\lambda,b,\sigma_1,\ldots,\sigma_\ell\in \mathbb{R}$,
and
\[
A:=
\begin{pmatrix}
\lambda & b\\
0 & \lambda
\end{pmatrix}\quad \textrm{ and }\quad
B_j:=\sigma_j \begin{pmatrix}
1 & 0\\
0 & 1
\end{pmatrix},\quad j=1,\ldots,\ell.
\]
Proposition~2 in~\cite{Senosiain} yields
that~\eqref{eq:idnoi} is exponentially mean-square stable, if and only if,  \[
2\lambda+\sum_{j=1}^\ell \sigma_j^2<0.
\]

In the sequel, we set $\lambda=-1$, $b=2$, $\sigma_j=2$ for $1\leq j\leq \ell$, $p=0.1$ and $\varepsilon=0.001$.
First, we set $\ell=1$ and obtain a solution to the BMI problem~\eqref{eq:BMIP} with
\begin{equation}
Q=
\begin{pmatrix}
0.473458820225827 &  0.178684084077058\\
   0.178684084077058  & 0.185153654570083
\end{pmatrix},
\end{equation}
although $2\lambda+\sigma_1^2=2>0$.  Hence, the origin is $0.1-$ES, and therefore GASiP. However, it is not exponentially mean-square stable.

Now for $\ell=2$, $\ell=3$, and $\ell=6$ we obtain a solution to the BMI problem~\eqref{eq:BMIP} with
\begin{align*}
Q&=
\begin{pmatrix}
 0.454559027917115 &  0.126148161036448\\
  0.126148161036448 &  0.107293494631372
\end{pmatrix}\quad \textrm{ for } \quad \ell=2,\\
Q&=
\begin{pmatrix}
0.436005113062966 &  0.096327529945338\\
 0.096327529945338  & 0.073678666871984
\end{pmatrix}\quad \textrm{ for } \quad \ell=3\quad \textrm{ and }\\
Q&=
\begin{pmatrix}
0.415228565039829 &  0.046662438957609\\
 0.046662438957609 &  0.042176845241893
\end{pmatrix}\quad \textrm{ for } \quad \ell=6,
\end{align*}
although $2\lambda+\sum_{j=1}^\ell \sigma_j^2=6,10,22$ for $\ell=2,3,6$, respectively.  Hence, in all cases, the origin is GASiP but it is not exponentially mean-square stable.  The preceding example highlights  the conservativeness of exponential mean-square stability if the intention is to verify
GASiP.

\subsubsection{\textbf{Off-diagonal noise systems}}\label{sec:offdiagonal}

\hfill

\noindent
In this section, we  investigate the stability of the null solution for the SDEs
\begin{equation}\label{eq:offdia}
\ud X(t;x) = AX(t;x)\ud t+ B_i X(t;x)\ud W(t),\quad t\geq 0,\quad i=1,2,
\end{equation}
with initial state $X(0;x)=x\in \mathbb{R}^2$, parameters $\lambda,b,\sigma_1,\sigma_2 \in \mathbb{R}$, and
\[
A:=
\begin{pmatrix}
\lambda & b\\
0 & \lambda
\end{pmatrix},\quad
B_1:=\sigma_1 \begin{pmatrix}
0 & -1\\
1 & 0
\end{pmatrix}, \quad\textrm{ and }\quad B_2:=\sigma_2 \begin{pmatrix}
0 & 0\\
1 & 0
\end{pmatrix}.
\]
With $i=1$, $\lambda=-1$, $b=2$, $\sigma_1=1.1$, $p=0.1$ and $\varepsilon=0.001$
we obtain a solution to the BMI problem~\eqref{eq:BMIP} with
\begin{equation}
Q=
\begin{pmatrix}
   0.297895991144839  & 0.147907009129502 \\
   0.147907009129502 &  0.579136945567880
\end{pmatrix},
\end{equation}
however, with $p=2$ the LMI problem~\eqref{LMIeq} does not yield a solution.  Hence, the origin is $0.1$-ES stable, and thus GASiP, although it is not exponentially mean-square stable for these parameter values.
With $\sigma_1=1$, $p=2$ and all the other parameters unchanged, we get the solution
\begin{equation}
Q=
\begin{pmatrix}
   8.138668959061926 &  5.037463351312330 \\
   5.037463351312330 & 15.354367614159404
\end{pmatrix}
\end{equation}
to the LMI problem~\eqref{LMIeq} and the origin is exponentially mean-square stable.
While for $i=2$, $\lambda=-1$, $b=2$, $\sigma_2=1.7$, $p=0.1$ and $\varepsilon=0.001$
we obtain a solution to the BMI problem~\eqref{eq:BMIP} with
\begin{equation}
Q=
\begin{pmatrix}
   0.407005635565675  & 0.131672012443728 \\
   0.131672012443728 &  0.361655336346576
\end{pmatrix},
\end{equation}
but with $p=2$ the LMI problem~\eqref{LMIeq} does not have a solution.  Hence, again, the origin is $0.1$-ES stable, and thus GASiP, although it is not exponentially mean-square stable.
For $\sigma_2=0.9999997$, $p=2$ and all the other parameters untouched, we obtain a solution
\begin{equation}
Q=
\begin{pmatrix}
     82.1795694954346  & 82.1795204431116 \\
   82.1795204431116  & 164.3591391900936
\end{pmatrix}
\end{equation}
to the LMI problem~\eqref{LMIeq} and the origin is exponentially mean-square stable.  Finally, for $\sigma_2=0.9999998$ we do not get a solution to the LMI problem~\eqref{LMIeq}.

\subsection{\textbf{Dimension three}}\label{sec:dimthree}
\hfill

\noindent
In this subsection, we study three dimensional systems of  cancer self-remission modeling and smoking modeling.

\subsubsection{\textbf{Cancer self-remission and tumor stability}}\label{sec:cancertumor}\hfill

\noindent
We consider the linearization around the unique positive and stable equilibrium of the non-linear cancer self-remission and tumor stability model introduced in~\cite{Sarkar}.
Here, positive means that all components of the vector are positive.
Let $(X(t;x))_{t\geq 0}$ be the solution of the linear SDE
\begin{equation}
\ud X(t;x) = AX(t;x)\ud t+
B_1 X(t;x)\ud W_1(t)+B_2 X(t;x)\ud W_2(t)+B_3 X(t;x)\ud W_3(t),\quad t\geq 0,
\end{equation}
where
\begin{equation}\label{eq:defAvieja}
\begin{split}
&A:=
\begin{pmatrix}
\delta_1 & \delta_3 & 0\\
0 & 0 & \frac{\beta}{\alpha}\delta_2\\
0 & -d_1 & -\frac{sd_1}{\beta k_2}
\end{pmatrix},
\quad
B_1:=\begin{pmatrix}
\sigma_1 & 0 & 0\\
0 & 0 & 0\\
0 & 0 & 0
\end{pmatrix},\quad
B_2:=\begin{pmatrix}
0 & 0 & 0\\
0 & \sigma_2 & 0\\
0 & 0 & 0
\end{pmatrix}\\
&\textrm{and }\quad
B_3:= \begin{pmatrix}
0 & 0 & 0\\
0 & 0 & 0\\
0 & 0 & \sigma_3.
\end{pmatrix}
\end{split}
\end{equation}
Here  $r$, $q$, $k_1$, $k_2$, $\alpha$, $\beta$, $d_1$, $d_2$, $s$ are positive constants, $\sigma_1$, $\sigma_2$ and $\sigma_3$ are non-negative constants and
\begin{equation}\label{eq:defdeltavieja}
\begin{split}
\delta_1:&=-\sqrt{\left[\frac{\alpha s}{\beta}\left(1-\frac{d_1}{\beta k_2}\right)-\frac{\alpha d_2}{\beta}-r\right]^2+\frac{4rq}{k_1}}<0,\\
\delta_2:&=\alpha N^\dagger>0,\ \ \text{with}\ \ N^\dagger:= \frac{ s}{\beta}\left(1-\frac{d_1}{\beta k_2}\right)-\frac{ d_2}{\beta},\\
\delta_3:&=-\alpha \left(\frac{-(\alpha N^\dagger-r)+\sqrt{(\alpha N^\dagger-r)^2+\frac{4rq}{k_1}}}{2\frac{r}{k_1}}\right)<0.
\end{split}
\end{equation}
For a full interpretation and restrictions of the variables and parameters, see~Appendix~\ref{ap:sarkar}.

In the sequel, we study the example given in~\cite{Sarkar}. More precisely,
with the parameters $\alpha=0.3$, $\beta=0.1$, $q=10$, $r=0.9$, $s=0.8$, $k_1=0.8$, $k_2=0.7$, $d_1=0.02$, $d_2=0.03$, as in Table~1, p.~71 of~\cite{Sarkar}, and $\sigma_1=3.67$, $\sigma_2=0.13$, $\sigma_3=1.63$
used in Fig.~1, p.~76 of~\cite{Sarkar},
 the sufficient conditions given in Theorem~\ref{Cneccond} in Appendix~\ref{ap:sarkar} for the exponential mean-square stability of the null solution cannot be fulfilled for
$\sigma_3=1.63$. Indeed, since $\delta_2>0$ and $w_3+w_4>0$, we obtain the following contradiction
\begin{equation}\label{eq:inesig}
1.63^2=\sigma^2_3=\frac{-2\frac{\beta}{\alpha}\delta_2 w_4}{w_3+w_4} +2\frac{sd_1}{\beta k_2} \le 2\frac{sd_1}{\beta k_2} =\frac{2}{5} < 1.63^2.
\end{equation}
This is in contrast to the assertion that the null solution is exponentially mean-square stable
in~\cite{Sarkar}.
Set $p=2$ and recall that the corresponding LMI
problem~\eqref{LMIeq} obtained from the BMI
problem~\eqref{eq:BMIP} for the above parameters are equivalent.
One can verify that the  LMI problem~\eqref{LMIeq} does not have a solution and the null solution is not exponentially mean-square stable for these 
 parameters values.
However, for $\sigma_3=0.4\cdot 1.63=0.625$ we have that $2/5=0.4>\sigma_3^2=0.390625$ and now the  LMI problem~\eqref{LMIeq} has the solution
 \begin{equation}
 \label{Qmatcancer}
Q=
\begin{pmatrix}
   0.058076730721533 & -0.006898010784651 & -0.000513990964685 \\
  -0.006898010784651 &  4.459961467243316 &  2.361679166055028 \\
  -0.000513990964685 &  2.361679166055028 & 95.032770564673839
\end{pmatrix}
\end{equation}
for $\varepsilon=0.01$.
Now, we increase the intensity of the noise by $5\%$, i.e., set $\sigma_1=1.05\cdot3.67$, $\sigma_2=1.05\cdot 0.13$, and $\sigma_3=1.05\cdot0.625$. Then the sufficient conditions given in Theorem~\ref{Cneccond} in Appendix~\ref{ap:sarkar} are neither fulfilled for $\sigma_1$, because
$\sigma_1^2>-2\delta_1$, nor for $\sigma_3$ due to a  similar  computation as  in~\eqref{eq:inesig}.
For these parameters the corresponding LMI problem does also not have a solution and the null solution is not exponentially mean-square stable.  If we, however, use the matrix $Q$ given in~\eqref{Qmatcancer} and solve the corresponding LMI problem obtained from the BMI problem by assigning values from the
matrix $Q$ to the corresponding variables, we then obtain a solution
with $p=0.01$ and $\varepsilon=10^{-5}$; for a discussion of such LMI problems
see~\cite{HafsteinGudmundsson}. 
However, if we use the matrix $Q$ given in~\eqref{Qmatcancer} and assign its values to the corresponding variables in the BMI problem \eqref{eq:BMIP}, we obtain an LMI problem that has a feasible solution with $p=0.01$ and $\varepsilon=10^{-5}$.
To see how the BMI becomes an LMI by assigning these values, consider the BMI in the form \eqref{eq:defJmatrix}, with $m=n(n+1)/2$ and $K>m+1$, as written in the proof of Theorem  \ref{th:stability}.  By fixing the values of the variables $q_1,q_2,\ldots,q_m$ to the entries in the upper-triangular part of $Q$, row-by-row, the BMI \eqref{eq:defJmatrix} becomes an LMI because all quadratic terms are fixed constants.
Similar LMI problems are discussed in \cite{HafsteinGudmundsson}.
Thus, the null solution is indeed $0.01$-exponentially stable and therefore GASiP.  Nevertheless, using the heuristic from~\cite{KZM2018BMI1,KZM2018BMI2} to compute a Lyapunov function directly using the BMI problem remained unsuccessful.
We presume that this is because that the needed matrices $Q$ for a feasible solution are rather ill-conditioned (big condition number), as the $Q$ given
in~\eqref{Qmatcancer} has eigenvalues $\lambda_1\approx 0.0581$,  $\lambda_2\approx 4.3984$, $\lambda_3=  95.0943$, i.e., its condition number $\lambda_3/\lambda_1$ is approximately $1637$.

\subsubsection{\textbf{Stochastic stability of a mathematical model of smoking}}\label{sec:smookingstable}\hfill

\noindent
In this subsection, we analyze the linearization around the unique endemic equilibrium state of the non-linear
 stochastic dynamics for a mathematical model of smoking given in Equation~(3) of~\cite{Lahrouz}.

Let $E_*=(P_*,S_*,Q_{T,*})$ denote the
endemic equilibrium state (smoking-present equilibrium) of the Model~(2) of~\cite{Lahrouz}.
More precisely,
\begin{equation}
\begin{split}
P_*:=\frac{1}{R},\quad
S_*:=\frac{\mu}{\beta}(R-1)\quad \textrm{ and }\quad
Q_{T,*}:=\frac{\gamma(1-\sigma)}{\mu+\alpha}S_*
\end{split}
\end{equation}
where the parameters $\alpha$, $\beta$, $\gamma$, $\mu$ are positive,  $\sigma\in (0,1)$ and $R>1$ is defined as follows
\begin{equation}
R:=\frac{\beta(\mu+\alpha)}{\mu(\mu+\alpha)+\gamma(\sigma\alpha+\mu)}.
\end{equation}
For a full description of the parameters and variables we refer to~\cite{Lahrouz, Sharomi}.
As in Section~3.3 of~\cite{Lahrouz}, we now consider the linearization around $E_*$ for
Equation~(3) in~\cite{Lahrouz}. More precisely, we consider
the unique strong solution to the following linear SDE:
\begin{equation}
\ud X(t;x) = AX(t;x)\ud t+
B_1 X(t;x)\ud W_1(t)+B_2 X(t;x)\ud W_2(t)+B_3 X(t;x)\ud W_3(t),\quad t\geq 0,
\end{equation}
where
\begin{equation}
\begin{split}
&A:=
\begin{pmatrix}
-(\mu+\beta S_*) & -\beta P_* & 0\\
\beta S_* & -(\mu+\gamma-\beta P_*) & \alpha\\
0 & \gamma(1-\sigma)& -(\mu+\alpha)
\end{pmatrix},\quad
B_1:=\begin{pmatrix}
\sigma_1 P_* & 0 & 0\\
0 & 0 & 0\\
0 & 0 & 0
\end{pmatrix},\\
&
B_2:=\begin{pmatrix}
0 & 0 & 0\\
0 & \sigma_2 S_* & 0\\
0 & 0 & 0
\end{pmatrix}\quad \textrm{ and }\quad
B_3:= \begin{pmatrix}
0 & 0 & 0\\
0 & 0 & 0\\
0 & 0 & \sigma_3 Q_{T,*}
\end{pmatrix}.
\end{split}
\end{equation}
with positive constants $\sigma_1$, $\sigma_2$ and $\sigma_3$.
The components of $X(t;x)$ are interpreted as follows: the first component
$X_1(t;x)$ denotes the proportion of potential smokers at time $t$, the second component
$X_2(t;x)$ denotes the proportion of smokers at time $t$ and  the third component $X_3(t;x)$ denotes the proportion of smokers who temporarily quit
smoking at time $t$.

In Theorem~3.4 of~\cite{Lahrouz},  sufficient conditions in the parameters of the model are derived for the exponential mean-square stability ($p=2$) of its solution $(X(t;x))_{t\geq 0}$, i.e.,~the exponential mean-square stability of the equilibrium $E_*$. Such conditions take the form of upper bounds on
the noise intensity parameters $\sigma_1,\sigma_2,\sigma_3$. In addition, an analytical formula for a Lyapunov function is provided in the proof of Theorem~3.4 in~\cite{Lahrouz}.

In what follows, we fix the following parameters:
 $\alpha=0.3$, $\beta=2$, $\gamma=1$, $\mu=1$ and $\sigma=0.8$.
With the parameters $\sigma_1=1, \sigma_2=12,\sigma_3=500$ the conditions of Theorem~3.4 in~\cite{Lahrouz} are fulfilled and with $p=2$ the LMI problem~\eqref{LMIeq}  with $\varepsilon=0.01$ gives
\begin{equation}
Q=
\begin{pmatrix}
0.193270606602671 &  0.575166417171658 &  0.109139637875147 \\
   0.575166417171658 &  21.979601279704919  & 4.898808646630821 \\
   0.109139637875147 &  4.898808646630821 &  7.695985872174183
\end{pmatrix}.
\end{equation}
With the parameters $\sigma_1=1, \sigma_2=23,\sigma_3=700$  the sufficient conditions given in Theorem~3.4 of~\cite{Lahrouz} are not satisfied for $\sigma_2$ and $\sigma_3$, but the LMI
problem~\eqref{LMIeq} has a solution and thus verifies exponential mean-square stability  with $\varepsilon=0.01$ and delivers
\begin{equation}
Q=
\begin{pmatrix}
0.073696787670233  & 0.820536691302952  & 0.223027127168492 \\
   0.820536691302952  &38.990165056725857 &  9.271874680978858 \\
   0.223027127168492 &  9.271874680978858 &  8.335304714592960
\end{pmatrix}.
\end{equation}
With the parameters $\sigma_1=1.15, \sigma_2=26.45,\sigma_3=805$, i.e.,~all $\sigma_1$, $\sigma_2$ and $\sigma_3$ are $15\%$ larger, the  LMI
problem~\eqref{LMIeq} does not have a solution and the null solution is not exponentially mean-square stable.

However, using the heuristic
from~\cite{KZM2018BMI1,KZM2018BMI2} to solve the BMI problem~\eqref{eq:BMIP} we obtained a solution even with $\sigma_1=1.2, \sigma_2=27.6,\sigma_3=840$, i.e.~all $\sigma_1$, $\sigma_2$, and $\sigma_3$ are $20\%$ larger, setting $p=0.1$ and $\varepsilon=10^{-5}$;
 \begin{equation}
Q=
\begin{pmatrix}
   0.014277198281505 &  0.001654320493336 & -0.022100683099102 \\
   0.001654320493336 &  0.562180335603617 &  0.117951796401118 \\
  -0.022100683099102 &  0.117951796401118 &  0.537489973802403
\end{pmatrix}.
\end{equation}
Hence, for these parameters,  the endemic equilibrium $E_*$ is $0.1$-exponentially stable in probability, and thus GASiP, although it is not exponentially mean-square stable.

\section{\textbf{Conclusions}}
\label{sec:conclusions}\hfill

\noindent
We have shown that for linear stochastic differential equations of the form~\eqref{eq:modelito}  a Lyapunov function of the form
\[
V(x)=\|x\|_Q^p:=\langle x,Qx\rangle^{\frac{p}{2}},\quad x\in \mathbb{R}^n,
\]
where $p>0$ and $Q$ is a symmetric and positive definite matrix, can be computed using a certain
bilinear matrix inequality (BMI) problem.

A feasible solution to the BMI problem gives the matrix $Q$ and hence a Lyapunov function for the system~\eqref{eq:modelito}, which asserts the $p$-th (moment) exponential stability ($p$-ES) of the null solution, and thus its global asymptotic stability in probability (GASiP).  If $p=2$ the BMI simplifies
to a linear matrix inequality (LMI)
and the exponential mean-square stability ($2$-ES) of the null solution is equivalent to the feasibility of the LMI problem.

We demonstrate for numerous examples from the literature that our novel approach can assert GASiP for the null solution, although the null solution is not exponentially mean-square stable.

We point out that the matrices $A, B_1\ldots, B_{\ell}$ in~\eqref{eq:modelito} are time-independent. As an interesting problem and future work, we would like to study a similar approach for 
time-dependent matrix coefficients, which a priori it does not follows straightforwardly. This will allow us to study the stability of interesting models such as the motion of a helicopter blade, which is a modeled by a multivariate geometric Brownian motion with periodic matrix coefficients
that depend on different parameters such as the velocity of the
helicopter or some geometrical characteristics of the blade, see Chapter~5.1~in~\cite{Talay}.

\appendix
\section{\textbf{Proofs of Lemma~\ref{lem:lyapunov} and Theorem~\ref{th:stability}}}\label{ap:proofmaintheorem}
\hfill

\noindent
In this section, we prove Lemma~\ref{lem:lyapunov} and Theorem~\ref{th:stability}.

\subsection{\textbf{Proof of Lemma~\ref{lem:lyapunov}}}\label{sub:prueba}\hfill

\noindent
\begin{proof}[Proof of Lemma~\ref{lem:lyapunov}]
The proof follows from Lemma~4.2 in~\cite{HafsteinGudmundsson}. For completeness we provide the main steps here.
Assume that~\eqref{eq:Hine} holds true.
The proof that the null solution is GASiP follows
straightforwardly from  Lemma~4.2 in~\cite{HafsteinGudmundsson}.
Indeed,
for the function $V(x)=\|x\|^p_Q$, $x\in \mathbb{R}^n$, a straightforward computation yields
\begin{equation}\label{eq:LVH}
(\mathcal{L}V)(x)=-\frac{p}{2}\|x\|_Q^{p-4}H(x)\quad \textrm{ for all }\quad x\in \mathbb{R}^n
\end{equation}
with
\begin{equation}\label{eq:H}
\begin{split}
H(x)&=-x^* \left( A^* Q+  QA+ \sum_{j=1}^\ell B_j^* Q B_j\right)x \|x\|_Q^2 + \frac{2-p}{4}\sum_{j=1}^\ell \left(x^* (QB_j+B_j^* Q)x\right)^2,
\end{split}
\end{equation}
where $\mathcal{L}V$ is defined in~\eqref{eq:lyapunov}.
Since we are assuming that inequality~\eqref{eq:Hine} holds true,~\eqref{eq:LVH} implies Item~(ii) in Theorem~\ref{th:tool} for a suitable positive constant $c_3$ and hence the null solution is GASiP.
\end{proof}

\subsection{\textbf{Proof of Theorem~\ref{th:stability}}}\label{sec:proofthnew}\hfill

\begin{proof}[Proof of Theorem~\ref{th:stability}]
In the sequel, we show that the
inequality~\eqref{eq:Hine} can be algorithmically rewritten as a BMI with variables $Q$ and $c$.

\noindent
\textbf{Step 1: the order and cardinality of a set of multi-indices.}
We note that $H$ given in~\eqref{cond:H} is an homogeneous polynomial of degree $4$, see Remark~\ref{rem:multihomog}.
Let $\mathcal{I}_n$ be the set of all $n$-dimensional multi-indices of length $4$, that is,
\[
\mathcal{I}_n:=\left\{\alpha\in \mathbb{N}^n_0: |\alpha|:=\sum_{j=1}^{n}\alpha_j=4\right\},
\]
where $\mathbb{N}_0:=\{0,1,2,\ldots\}$.
It is clear that $\mathcal{I}_n$ has finite cardinality.
We  equip $\mathcal{I}_n$ with the reverse lexicographical order and then $\alpha_n(j)$ denotes the $j$-th element of $\mathcal{I}_n$ with respect to such order. For instance, for $n=2$ we obtain
\[
\alpha_2(1)=(4,0),\,\alpha_2(2)=(3,1),\,\alpha_2(3)=(2,2),\,\alpha_2(4)=(1,3),\, \alpha_2(5)=(0,4),
\]
while for $n=3$ we have
\begin{equation}
\begin{split}
&\alpha_3(1)=(4,0,0),\,\alpha_3(2)=(3,1,0),\,\alpha_3(3)=(3,0,1),\,\alpha_3(4)=(2,2,0),\, \alpha_3(5)=(2,1,1),\\
&\alpha_3(6)=(2,0,2),\, \alpha_3(7)=(1,3,0),\,
\alpha_3(8)=(1,2,1),\,
\alpha_3(9)=(1,1,2),\,
\alpha_3(10)=(1,0,3),\,\\
&
\alpha_3(11)=(0,4,0),\,
\alpha_3(12)=(0,3,1),\,
\alpha_3(13)=(0,2,2),\,
\alpha_3(14)=(0,1,3),\,
\alpha_3(15)=(0,0,4).
\end{split}
\end{equation}

We say that two multi-indexes in $\mathcal{I}_n$ are equivalent
if one can be obtained from the other by  a permutation of its components.
We note that this operation defines an equivalence relation and hence a natural partition in equivalence classes.
We observe there are at most five
equivalence classes and we identify such classes according to the numbers that pop up in their components as follows:
\begin{equation}\label{def:class}
\begin{split}
&\mathcal{C}_1\equiv  [4],\,\,
\mathcal{C}_2\equiv [3,1],\,\,
\mathcal{C}_3\equiv [2,2],\,\,\mathcal{C}_4\equiv [2,1,1]\quad \textrm{ and }\quad
\mathcal{C}_5\equiv [1,1,1,1].
\end{split}
\end{equation}
For instance, for $n=2$ we have that
$\alpha_2(1)$ and $\alpha_2(5)$ are of Class 1, $\alpha_2(2)$ and $\alpha_2(4)$ are of Class 2, $\alpha_2(3)$ is of Class 3.
For $n=3$ we obtain four classes and for $n\geq 4$ all the five classes appear.

Let $|\mathcal{I}_n|$ denote the
cardinality of $\mathcal{I}_n$ and
let $|\mathcal{C}_j|$ denote the cardinality of $\mathcal{C}_j$ for each
$j\in \{1,2,3,4,5\}$.
Let $n\in \{2,3,\ldots\}$.
The partition~\eqref{def:class}  yields
\begin{equation}
\begin{split}
|\mathcal{I}_n|&=|\mathcal{C}_1|+|\mathcal{C}_2|+|\mathcal{C}_3|+|\mathcal{C}_4|+|\mathcal{C}_5|\\
&=n+n(n-1)+n\frac{(n-1)}{2}+n\frac{(n-1)(n-2)}{2}+{{n}\choose{4}}\mathds{1}_{\{n\geq 4\}},
\end{split}
\end{equation}
where $\mathds{1}_{\{n\geq 4\}}:=1$ if $n\geq 4$, and $\mathds{1}_{\{n\geq 4\}}=0$ if $n<4$.

Our goal is to find  for an a\,priori fixed $\varepsilon>0$
\begin{itemize}
\item[(a)]
a constant $c\ge \varepsilon$,
\item[(b)] a positive definite symmetric matrix $Q\in \mathsf{M}_n$, $Q \succeq \varepsilon I_n$
\item[(c)] and a symmetric and positive semi-definite matrix $P\in \mathsf{M}_m$
\end{itemize}
such that
\begin{equation}\label{eq:Pzuno}
P_c(x)=z^*Pz \quad \textrm{ for all }\quad z\in \mathbb{R}^m,
\end{equation}
where $m:=n(n+1)/2$ and
\begin{equation}\label{def:zuno}
z:=(x^2_1,x_1x_2,x_1x_3,\ldots, x_1x_{n},x^2_2,x_2x_3,\ldots,x_2x_n,x^2_3, x_3x_4,\ldots,x_3x_n,\ldots,x^2_{n-1}, x_{n-1}x_n, x^2_n)^*.
\end{equation}
We now take four partial derivatives on both sides of~\eqref{eq:Pzuno}. In other words,
\begin{equation}\label{eq:Pzmultione}
\partial^4_{\alpha_n(j)}(P_c(x))=\partial^4_{\alpha_n(j)}(z^*Pz)\quad \textrm{ for all }\quad j\in \{1,2,\ldots,|\mathcal{I}_n|\},
\end{equation}
where for each $j\in \{1,2,\ldots,|\mathcal{I}_n|\}$, $\partial^4_{\alpha_n(j)}$ denotes the fourth partial derivative in terms of $x\in \mathbb{R}^n$ with respect to the index $\alpha_n(j)$.
We emphasize that~\eqref{eq:Pzmultione}  does not determine all the elements of $P$ uniquely as functions of the elements of $Q$ and $c$, in some cases we only obtain some linear relations they must fulfil.
Further, note that the coefficients of $P$ depend on $c$ and the coefficients of $Q$,
see Remark~\ref{rem:compuentries} for a concrete example when $n=2$ and $\ell=1$ in~\eqref{eq:modelito}.

We now claim that there are:
\begin{itemize}
\item[(1)] No dependent variables if $\alpha_n$ is of class 1 or class 2.
\item[(2)] Two dependent variables if $\alpha_n$ is of class 3 or class 4.
\item[(3)] Three dependent variables if $\alpha_n$ is of class 5.
\end{itemize}
Indeed, set $z=(z_j)_{j\in\{1,\ldots,m\}}$, where for each $j\in \{1,\ldots,m\}$ the entry $z_j$ is defined in~\eqref{def:zuno}.
Recall that $P=(P_{i,j})_{i,j\in\{1,\ldots,m\}}\in \mathsf{M}_m$.
Then we have
\begin{equation}\label{eq:expansone}
z^*Pz=\sum_{i,j=1}^m z_iz_j P_{i,j}.
\end{equation}

We start with the proof of Item~(1).
Assume that $\alpha_n$ is of class 1.
For a given  $r\in \{1,2,\ldots,n\}$, we
observe that $z_iz_j=x^4_r$ for some $i:=i(r),j:=j(r)\in \{1,\ldots,m\}$, if and only if $z_i=x^2_r$, $z_j=x^2_r$ and $i=j$.
Note that the coefficient of $z_iz_i$ in~\eqref{eq:expansone} is $P_{i,i}$. Hence
applying
~\eqref{eq:Pzmultione} for $\alpha_n$ yields that there are no dependent variables necessary, i.e.,~we can write the corresponding $P_{i,i}$ directly in terms of the elements of $Q$ and $c$.

Now, assume that $\alpha_n$ is of class 2.
Let $r,s\in\{1,2,\ldots,n\}$ with $r<s$.
To obtain a term of the form $x^3_r x_s$ we must have for some $i,j\in \{1,\ldots,m\}$
that
$z_i=x^2_r$, $z_j=x_rx_s$ or
$z_i=x_rx_s$, $z_j=x^2_r$.
Then the coefficients of $z_iz_j$ and $z_jz_i$ in~\eqref{eq:expansone} are given by $P_{i,j}$ and $P_{j,i}$, respectively.
Hence
applying
~\eqref{eq:Pzmultione} for $\alpha_n$ gives
$P_{i,j}+P_{j,i}=2P_{i,j}$ since $P$ is a symmetric matrix. Therefore, there are no
dependent variables necessary.

We continue with the proof of Item~(2).
Assume that $\alpha_n$ is of class 3.
Let $r,s\in\{1,2,\ldots,n\}$ with $r<s$.
To get a term of the form $x^2_rx^2_s$ we must have for some $i,j\in \{1,\ldots,m\}$
that
$z_i=x^2_r, z_j=x^2_s$, ($i\neq j$);
$z_i=x^2_s$, $z_j=x^2_r$, ($i\neq j$); or  $z_i=x_rx_s$, $z_j=x_rx_s$, ($i= j$),
whose coefficients of $z_iz_j$ in~\eqref{eq:expansone} are given by $P_{i,j}$, $P_{j,i}$ and $P_{k,k}$ (for $k=i=j$), respectively. This yields two dependent variables after applying~\eqref{eq:Pzmultione} for $\alpha_n$ since we obtain $P_{i,j}+P_{j,i}+P_{k,k}=2P_{i,j}+P_{k,k}$.

Now, assume that $\alpha_n$ is of class 4.
Let $r,s,t\in\{1,2,\ldots,n\}$ with $r<s<t$.
A term of the form $x^2_rx_sx_t$ can be only obtained for some $i,j\in \{1,\ldots,m\}$
such that
$z_i=x^2_r$, $z_j=x_sx_t$, ($i\neq j$); or
$z_i=x_sx_t$, $z_j=x^2_r$, ($i\neq j$); or $z_i=x_sx_r$, $z_j=x_rx_t$, ($i\neq j$); or $z_i=x_rx_t$, $z_j=x_rx_s$, ($i\neq j$).
Thus
applying
~\eqref{eq:Pzmultione} for $\alpha_n$ gives
$2P_{i,j}+2P_{k,\ell}$ since $P$ is a symmetric matrix. Therefore, there are two
dependent variables. This means that we can only write the sum $2P_{i,j}+2P_{k,\ell}$ in terms of the elements of $Q$ and $c$.

Finally, we  show Item~(3).
Assume that $\alpha_n$ is of class 5.
Let $r,s,t,u\in\{1,2,\ldots,n\}$ with $r<s<t<u$.
A term of the form $x_rx_sx_tx_u$ can be only obtained for some $i,j\in \{1,\ldots,m\}$
such that
$z_i=x_rx_s$, $z_j=x_tx_u$, ($i\neq j$); or
$z_i=x_tx_u$, $z_j=x_rx_s$, ($i\neq j$); or $z_i=x_rx_t$, $z_j=x_sx_u$, ($i\neq j$); or $z_i=x_sx_u$, $z_j=x_rx_t$, ($i\neq j$); or
$z_i=x_rx_u$, $z_j=x_sx_t$, ($i\neq j$); or
$z_i=x_sx_t$, $z_j=x_rx_u$, ($i\neq j$).
Therefore,
applying
~\eqref{eq:Pzmultione} for $\alpha_n$ gives
$2P_{i,j}+2P_{k,\ell}+2P_{p,q}$ since $P$ is a symmetric matrix. Therefore, there are three dependent variables.

As a consequence we have that the number of dependency relations in a problem of size $n $ is given by
\begin{equation}\label{eq:deprela}
\begin{split}
R:&=|\mathcal{C}_3|+|\mathcal{C}_4|+|\mathcal{C}_5|=n\frac{(n-1)}{2}+n\frac{(n-1)(n-2)}{2}+{{n}\choose{4}}\mathds{1}_{\{n\geq 4\}}\\
&=n\frac{(n-1)^2}{2}+{{n}\choose{4}}\mathds{1}_{\{n\geq 4\}}.
\end{split}
\end{equation}

\noindent
\textbf{Step 2: the BMI problem.}
To write inequality~\eqref{eq:Hine} as a BMI we need to find real numbers (variables) $q_1,q_2,\ldots,q_K$ and symmetric matrices
$A_{(i,j)}$, $B_{(i)}, C\in \mathsf{M}_N$ for $i,j=1,\ldots,m$
such that
\begin{equation}\label{eq:defJmatrix}
J:=\sum_{i,j=1}^{m} q_iq_jA_{(i,j)}+\sum_{i=1}^K q_iB_{(i)}+C\succeq O_{N\times N}
\end{equation}
is equivalent to~\eqref{eq:Hine}.
We start by fixing the values  of $K$ and $N$ through
\begin{equation}\label{eq:defN}
\begin{split}
N&:=m+2R+n+1\\
&=
\frac{1}{2}(2n^3-3n^2+5n+2)+2{{n}\choose{4}}\mathbf{1}_{\{n \geq 4\}}
\end{split}
\end{equation}
and
\begin{equation}\label{eq:defK}
\begin{split}
K&:=m+1+2R\\
&=
\frac{1}{2}(2n^3-3n^2+3n+2)+2{{n}\choose{4}}\mathbf{1}_{\{n \geq 4\}}.
\end{split}
\end{equation}
Formulas~\eqref{eq:defN}
and~\eqref{eq:defK} can be found
in~\cite{Bjarkason}. However, we point out that there is a typo in the third equality for the formula for $N$ on p.~30.
We note that $J\in \mathsf{M}_N$.
In the sequel, we assign the variables
$q_1,q_2,\ldots,q_K$ of the BMI to the original variables $c$ and the entries of the matrix $Q$ and the needed entries of the  matrix $P$ (the dependent ones).
Recall that $Q=(Q_{i,j})_{i,j\in \{1,\ldots,n\}}$ and $P=(P_{i,j})_{i,j\in \{1,\ldots,m\}}$ with $m=n(n+1)/2$.
Let
\begin{equation}\label{eq:defqlabel}
\begin{split}
q_{j}&:=Q_{1,j}\qquad \textrm{ for }\quad j\in \{1,\ldots,n\},\\
q_{n+j-1}&:=Q_{2,j}\qquad \textrm{ for }\quad j\in \{2,\ldots,n\},\\
q_{n+(n-1)+j-2}&:=Q_{3,j}\qquad \textrm{ for }\quad j\in \{3,\ldots,n\},\\
&\,\,\vdots\\
q_{n+(n-1)+\cdots+3+j-(n-2)}&:=Q_{n-1,j}\quad \textrm{ for }\quad j\in \{n-1,n\},\\
q_{m}&:=Q_{n,n}.
\end{split}
\end{equation}

For simplicity, we set $q_K:=c$.
In other words, the first $m$-variables are set to the entries of $Q$ (only upper diagonal entries since $Q$ is a symmetric matrix) and the last variable $q_K$ to the variable $c$.
Now, we set the
value for the $(K-m-1)$-variables $(q_j)_{m+1\leq j \le K-1}$.
We stress that some entries of $P$ can be written in terms of the elements of $Q$ and $c$. Hence, for such entries of $P$ we do not need to assign $q$-variables.
The remainder $q$-variables are assignment to the elements of $P$ that posses dependent relations. In other words, the remaining $q$-variables
are assigned to the elements of $P$ that give dependency  relations in~\eqref{eq:Pzmultione} and there are exactly $R$ dependency relations, see~\eqref{eq:deprela}.

We point out that in order to implement an equality of the type $\ast=0$ for a dependent relation, we implement
\begin{equation}\label{eq:impequ}
\ast \geq 0\quad  \textrm{ and } -\ast\geq 0.
\end{equation}
Hence, we need $K$ variables $q_1,\ldots,q_K$, where $K$ is given in~\eqref{eq:defK}.

In the sequel, we show how to implement the constraints $Q\succ O_{n\times n}$, $P\succeq O_{m\times m}$ and $c>0$ into the BMI~\eqref{eq:defJmatrix}.
The semi-definite constraint $P\succeq O_{m\times m}$ is implemented in the principal sub-matrix $J_{1:m,1:m}$,
where for each $\ell_1,\ell_2,r_1,r_2\in
\{1,\ldots,m\}$ with $\ell_1\leq \ell_2$ and $r_1\leq r_2$ we denote the minor matrix of dimension $(\ell_2-\ell_1+1)\times(r_2-r_1+1)$ of $J$ by
$J_{\ell_1:\ell_2,r_1:r_2}$.
 Recall that the BMI problem~\eqref{eq:defJmatrix} is formulated in terms of semi-definite constraints instead of definite constraints.
Hence, we rewrite  $Q\succ O_{n\times n}$ and $c>0$ as $Q-\varepsilon I_n\succeq O_{n\times n}$ and $c-\varepsilon \geq 0$ for some $\varepsilon>0$, respectively.
Now, we implement the constraint $Q-\varepsilon I_n\succeq O_{n\times n}$ in the principal sub-matrix $J_{N-n:N-1,N-n:N-1}$ and the constraint $c-\varepsilon \geq 0$ in $J_{N,N}=J_{N:N,N:N}$. Finally, the remainder of the diagonal elements of $J$ are reserved for the implementation of the
dependency relations  (recalling~\eqref{eq:impequ}) and all other elements of $J$ are set to zero. Visually, the constraints are implemented as follows:
\begin{equation}
\begin{pmatrix}
P & & & & & & &\\
  & \check{\mu}_1-\hat{\mu}_1 & & & & & &\\
  &  & \hat{\mu}_1-\check{\mu}_1 & & & & &\\
  &  & & \ddots & & & &\\
  &  & &   & \check{\mu}_R-\hat{\mu}_R & & &\\
  &  & &   &  & \hat{\mu}_R-\check{\mu}_R & &\\
  &  & &   &  &  & Q-\varepsilon I_n &\\
  &  & &   &  &  &  & c-\varepsilon\\
\end{pmatrix}\succeq 0_{N\times N},
\end{equation}
where $\check{\mu}_r$ and $\hat{\mu}_r$ denote the left-hand side and the right-hand side of a dependency relation, respectively, $r=1,2,\ldots,R$.
In addition,
 the implicit elements that do not appear in the previous matrix representation are set to zero.
We note that the placement of the constraints in $J$ is irrelevant.

In what follows, we construct the symmetric matrices
$A_{(i,j)}$, $B_{(k)}, C\in \mathsf{M}_N$ for $i,j\in \{1,\ldots,m\}$ and $k\in \{1,\ldots,K\}$.
Let $(e_u)_{1\leq u\leq N}$ be the canonical basis of $\mathbb{R}^N$.
For each $u,v\in \{1,\ldots,N\}$ we define the matrix $E_{(u,v)}\in \mathsf{M}_N$ by
\begin{equation}\label{eq:basis}
E_{(u,v)}:=
\begin{cases}
e_ue^*_v & \textrm{ if }\quad u=v,\\
e_ue^*_v+e_ve^*_u & \textrm{ if }\quad u\not=v.
\end{cases}
\end{equation}
We note that $(E_{(i,j)})_{1\leq i\leq j\leq N}$ is a basis for the vector space of the symmetric matrices in $\mathsf{M}_N$.

We start with the construction of the matrix $A_{(i,j)}$ for each $i,j\in \{1,\ldots,m\}$.
Let $i,j\in \{1,\ldots,m\}$ be fixed.
We first observe that the matrix $A_{(i,j)}$ is associated to the variables $q_iq_j$ in~\eqref{eq:defJmatrix}. Hence, the entries of $A_{(i,j)}$ correspond to the coefficients of $q_iq_j$
in~\eqref{eq:Pzmultione}.

On the one hand,
if for some $\ell_1,\ell_2\in \{1,\ldots,m\}$ with $\ell_1\leq \ell_2$ we have the following relation arising from~\eqref{eq:Pzmultione}
\begin{equation}\label{eq:relat1}
P_{\ell_1,\ell_2}=\sum_{i=1}^n\sum_{j=i}^n c_{i,j}q_iq_j+\eta c,
\end{equation}
for some real numbers $c_{i,j}=c_{i,j}(\ell_1,\ell_2)$ for $i,j\in \{1,\ldots,n\}$ with $i\leq j$, and
$\eta=\eta(\ell_1,\ell_2)$.
Then we multiply the coefficient of
$q_i q_j$ with the matrix $E_{(\ell_1,\ell_2)}$, that is, $c_{i,j}E_{(\ell_1,\ell_2)}$.
We recall that the constraint $P\succeq O_{m\times m}$ is implemented in the principal sub-matrix $J_{1:m,1:m}$.

On the other hand,
if for some $\ell_1,\ell_2,r_1,r_2\in \{1,\ldots,m\}$, with $\ell_1\leq \ell_2$ and $r_1\leq r_2$ we have the following relation arising from~\eqref{eq:Pzmultione}
\begin{equation}\label{eq:relat2}
P_{\ell_1,\ell_2}+P_{r_1,r_2}=\sum_{i=1}^n\sum_{j=i}^n c_{i,j}q_iq_j+\eta c,
\end{equation}
for some real numbers $c_{i,j}=c_{i,j}(\ell_1,\ell_2,r_1,r_2)$ for $i,j\in \{1,\ldots,n\}$ with $i\leq j$, and
$\eta=\eta(\ell_1,\ell_2,r_1,r_2)$.
Then we multiply the coefficient of
$q_i q_j$ with the matrix $E_{(u,u)}-E_{(u+1,u+1)}$, where $m+1\leq u\leq m+2R$. That is, $E_{(u,u)}$ and $E_{(u+1,u+1)}$ are the positions in the diagonal where the relations with dependent variables are implemented.
Hence, we have that  the matrix $A_{(i,j)}$ is a linear combination of the coefficients of $q_iq_j$ in the relations~\eqref{eq:relat1}
and~\eqref{eq:relat2} multiplied with the  corresponding basis matrices $E_{(u,v)}$ from~\eqref{eq:basis} to put them in the correct positions.

We continue with the construction of $B_{(i)}$ for $i\in \{1,\ldots,K\}$.
Recall that the matrix $Q$ is symmetric.
Using the same labelling as in~\eqref{eq:defqlabel},
 the matrices $B_{(i)}$ for $i\in \{1,\ldots,m\}$ are set
as $E_{(i,j)}$, where $(i,j)$ is the position of the elements of the upper triangular matrix $Q$ in the BMI matrix $J$. More precisely, 
\begin{equation}\label{eq:defBlabel}
\begin{split}
B_{(j)}&:=E_{(N-n,N-n+j-1)}\qquad\quad \,\,\,\, \textrm{ for }\quad j\in \{1,\ldots,n\},\\
B_{(n+j-1)}&:=E_{(N-n+1,N-n+j-1)}\qquad\,\,\,\,\, \textrm{ for }\quad j\in \{2,\ldots,n\},\\
B_{(n+(n-1)+j-2)}&:=E_{(N-n+2,N-n+1+j-2)}\qquad \textrm{ for }\quad j\in \{3,\ldots,n\},\\
&\,\,\vdots\\
B_{(n+(n-1)+\cdots+3+j-(n-2))}&:=E_{(N-2,N-2+j-(n-1))}\qquad\,\, \textrm{ for }\quad j\in \{n-1,n\},\\
B_{(m)}&:=E_{(N-1,N-1)}.
\end{split}
\end{equation}
The matrix $B_{(K)}$ is set as the linear combination of coefficients of the variable $c$ in the dependent relations multiplied by their corresponding matrices $E_{(\cdot,\cdot)}$, similarly as we discuss in~\eqref{eq:relat1} and~\eqref{eq:relat2} for the construction of matrices $A_{(i,j)}$.
The rest of the matrices $(B_{(i)})_{m+1\leq i\leq K-1}$ are used to implement the dependent relations.

Finally, the matrix $C$ is used to implement $-\varepsilon I_n$ and $-\varepsilon$ in the constrains  $Q-\varepsilon I_n\succeq O_{n\times n}$
and $c-\varepsilon\geq 0$, respectively, i.e.
\[
C:=-\varepsilon \sum_{i=N-n}^{N}E_{(i,i)}.
\]

The proof is complete.
\end{proof}

\section{\textbf{Proof of Theorem~4.2 in~\cite{Sarkar}}}\label{ap:sarkar}\hfill

\noindent
In this section, since there are typos in the proof of Theorem~4.2 in~\cite{Sarkar} that yield misconclusions, following the idea given in~\cite{Sarkar} we include the correct statement (see Theorem~\ref{Cneccond} below) of Theorem~4.2 in~\cite{Sarkar} and provide the proof.

The non-linear deterministic (prey-predator) model  introduced in Section~2 of~\cite{Sarkar} is the solution of the ordinary differential equation in $\mathbb{R}^3$ given by
\begin{equation}\label{eq:nonODE}
\ud\left(
\begin{matrix}
M(t)\\
N(t)\\
Z(t)
\end{matrix}
\right)=F(M(t),N(t),Z(t))\ud t,\quad t\geq 0,
\end{equation}
where the  vector field $F:\mathbb{R}^3\to \mathbb{R}^3$ is given by
\begin{equation}\label{eq:defF}
F(M,N,Z):=
\begin{pmatrix}
q+rM\left(1-\frac{M}{k_1}\right)-\alpha MN\\
\beta NZ-d_1N\\
sZ\left(1-\frac{Z}{k_2}\right)-\beta NZ-d_2Z
\end{pmatrix}
\end{equation}
with the following interpretation and restrictions of the variables and parameters:
\begin{itemize}
\item $M=(M(t))_{t\geq 0}$ is the density of tumor cells, i.e.~$M(t)$ is the density of tumor cells at time $t$,
\item $N=(N(t))_{t\geq 0}$ is the density of hunting predator cells, i.e., $N(t)$ is the density of hunting predator cells at time $t$,
\item $Z=(Z(t))_{t\geq 0}$ is the density of
resting predator cell, i.e., $Z(t)$ is the density of
resting predator cell at time $t$,
\item $r>0$ is the growth rate of tumor cells,
\item $q>0$ is the conversion of normal
cells to malignant ones (fixed input),
\item $k_1>0$ is the maximum carrying or packing capacity of tumor cells,
\item $k_2>0$  is the maximum carrying capacity of resting cells, 
\item $\alpha>0$ is
the rate of predation/destruction of tumor cells by the hunting cells,
\item $\beta$ is the rate of conversion
of resting cell to hunting cell,
\item $d_1>0$ is the natural death rate of hunting cells,
\item $d_2>0$ is the natural death rate of resting cells,
\item $s>0$ is the growth
rate of resting predator cells.
\end{itemize}
In addition, assume that $k_1>k_2$, $s>d_2$ and  $\beta>\frac{s d_1}{k_2(s-d_2)}$.
Note that $\beta>0$.
A straightforward computation yields
that there is a unique positive equilibrium of $F$, here denoted by
$E_3=(M^\dag,N^\dag,Z^\dag)$, i.e. $M^\dag$, $N^\dag$ and $Z^\dag$ are all positive numbers and $F(M^\dag,N^\dag,Z^\dag)=(0,0,0)$.
More precisely,
\begin{equation}\label{eq:MNZ}
\begin{split}
M^\dag&:=\frac{-(\alpha N^\dag-r)+\sqrt{(\alpha N^\dag-r)^2+\frac{4rq}{k_1}}}{2\frac{r}{k_1}}>0,\\
N^\dag&:=\frac{s}{\beta}\left(1-\frac{d_1}{\beta k_2} \right)-\frac{d_2}{\beta}>0\quad \textrm{and}\\
Z^\dag&:=\frac{d_1}{\beta}>0.
\end{split}
\end{equation}
The corresponding stochastic non-linear model introduced in Section~3 of~\cite{Sarkar} is the solution of the  stochastic differential equation given by
\begin{equation}\label{eq:nonSDE}
\ud\left(
\begin{matrix}
M(t)\\
N(t)\\
Z(t)
\end{matrix}
\right)=F(M(t),N(t),Z(t))\ud t+G(M(t),N(t),Z(t))
\ud W(t),
\end{equation}
where the vector field $F:\mathbb{R}^3\to \mathbb{R}^3$ is given in~\eqref{eq:defF}, the volatility diffusivity matrix $G:\mathbb{R}^3\to \mathbb{R}^{3\times 3}$ is defined by
\begin{equation}\label{eq:defG}
G(M,N,Z):=
\begin{pmatrix}
\sigma_1(M-M^\dag) & 0 & 0\\
0 & \sigma_2(N-N^\dag) & 0\\
0 & 0& \sigma_3(Z-Z^\dag)
\end{pmatrix}
\end{equation}
with $M^\dag$, $N^\dag$, and $Z^\dag$ given in~\eqref{eq:MNZ}, $\sigma_1$, $\sigma_2$ and $\sigma_3$ are positive constants, and $W:=(W(t))_{t\geq 0}$
\begin{equation}\label{eq:defW}
W(t):=\begin{pmatrix}
W_1(t)\\
W_2(t)\\
W_3(t)
\end{pmatrix},\quad t\geq 0,
\end{equation}
is a standard Brownian motion in $\mathbb{R}^3$.

The linearization of~\eqref{eq:nonSDE} around $E_3$ is the solution of the  following linear stochastic differential equation (centred at the positive equilibrium $E_3$)
\begin{equation}\label{eq:SDEE3}
\ud\left(
\begin{matrix}
\widetilde{M}(t)-M^\dag\\
\widetilde{N}(t)-N^\dag\\
\widetilde{Z}(t)-Z^\dag
\end{matrix}
\right)=DF(M^\dag,N^\dag,Z^\dag)\begin{pmatrix}
\widetilde{M}(t)-M^\dag\\
\widetilde{N}(t)-N^\dag\\
\widetilde{Z}(t)-Z^\dag
\end{pmatrix}\ud t+G(\widetilde{M}(t),\widetilde{N}(t),\widetilde{Z}(t))
\ud W(t),
\end{equation}
where $DF(M^\dag,N^\dag,Z^\dag)$ is the Jacobian of $F$ at $E_3=(M^\dag,N^\dag,Z^\dag)$. Straightforward computations yield
\begin{equation}\label{eq:defA}
A:=DF(M^\dag,N^\dag,Z^\dag)=
\begin{pmatrix}
\delta_1 & \delta_3 & 0\\
0 & 0 & \frac{\beta}{\alpha}\delta_2\\
0 & -d_1 & -\frac{sd_1}{\beta k_2}
\end{pmatrix}
\end{equation}
with
\begin{equation}\label{eq:defdelta}
\begin{split}
\delta_1:&=-\sqrt{\left[\frac{\alpha s}{\beta}\left(1-\frac{d_1}{\beta k_2}\right)-\frac{\alpha d_2}{\beta}-r\right]^2+\frac{4rq}{k_1}}<0,\\
\delta_2:&=\frac{\alpha s}{\beta}\left(1-\frac{d_1}{\beta k_2}\right)-\frac{\alpha d_2}{\beta}=\alpha N^\dag>0 \quad \textrm{and}\\
\delta_3:&=-\alpha M^\dag<0.\\
\end{split}
\end{equation}
Now, we state conditions on the parameters of the model in order that the null solution of the linear stochastic system~\eqref{eq:SDEE3} is exponentially mean-square stable.
\begin{theorem}
\label{Cneccond}\hfill

\noindent
Assume that $r$, $q$, $k_1$, $k_2$, $\alpha$, $\beta$, $d_1$, $d_2$ and $s$ are positive constants.
In addition, assume that $k_1>k_2$, $s>d_2$ and
$\beta>\frac{s d_1}{k_2(s-d_2)}$.
Recall the definition of 
$\delta_1<0$, $\delta_2>0$ and $\delta_3<0$ given in in~\eqref{eq:defdelta}.
Let  $w_3\in \mathbb{R}$ and $w_4\geq 0$ such that $w_3+w_4>0$, $w_2+w_4>0$, with $w_2$ given by
\begin{equation}\label{eq:defw2}
w_2:=\frac{\alpha}{\beta \delta_2}\left[
\left(d_1-\frac{\beta}{\alpha}\delta_2+\frac{sd_1}{\beta k_2} \right)w_4+d_1w_3
 \right],
\end{equation}
and
\begin{equation}\label{eq:nota1}
\frac{-\frac{\beta}{\alpha}\delta_2 w_4}{w_3+w_4} +\frac{sd_1}{\beta k_2}>0.
\end{equation}
Let $\sigma^2_1\geq 0$ be a constant such that
\begin{equation}\label{eq:n1new}
\sigma^2_1<-2\delta_1
\end{equation}
and we additionally assume that
\begin{equation}\label{eq:nota2}
\frac{2d_1 w_4}{w_2+w_4}+\frac{\delta^2_3}{(w_2+w_4)|2\delta_1+\sigma^2_1|}>0.
\end{equation}
Let $\sigma_2^2\geq 0$ and $\sigma^2_3\geq 0$ be constants satisfying
\begin{equation}\label{eq:n1}
\begin{split}
&\sigma^2_2<\frac{2d_1 w_4}{w_2+w_4}+\frac{\delta^2_3}{(w_2+w_4)|2\delta_1+\sigma^2_1|}\quad \textrm{ and }\quad \\
&\sigma^2_3<\frac{-2\frac{\beta}{\alpha}\delta_2 w_4}{w_3+w_4} +2\frac{sd_1}{\beta k_2}.
\end{split}
\end{equation}
Then the null solution of the linear SDE~\eqref{eq:SDEE3} is exponentially mean-square stable.
In particular, the choice $w_3>0$ and $w_4=0$ yields
$w_3+w_4>0$,~$w_2+w_4>0$,~
~\eqref{eq:nota1} and~\eqref{eq:nota2}.
\end{theorem}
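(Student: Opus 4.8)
The plan is to reduce the claim to a single matrix inequality and then solve it explicitly in terms of the weights $w_1,w_2,w_3,w_4$. Recall from Remark~\ref{rem:lmip2} (and Corollary~11.4.14 in~\cite{Arnold}) that the null solution of~\eqref{eq:SDEE3} is exponentially mean-square stable if and only if the LMI~\eqref{LMIeq} is feasible, i.e.\ there is a symmetric positive definite $Q$ with
\[
A^*Q+QA+\sum_{j=1}^{3}B_j^*QB_j\prec 0 ,
\]
where $A$ is the matrix~\eqref{eq:defA} and $B_1,B_2,B_3$ are the diagonal noise matrices from~\eqref{eq:defAvieja}. Because $B_j=\sigma_j e_je_j^*$, we have $\sum_{j}B_j^*QB_j=\operatorname{diag}\!\big(\sigma_1^2Q_{11},\sigma_2^2Q_{22},\sigma_3^2Q_{33}\big)$, so it suffices to produce one admissible $Q$.

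First I would take the ansatz
\[
Q:=\operatorname{diag}(w_1,w_2,w_3)+w_4\,vv^*,\qquad v:=e_2+e_3 ,
\]
so that $Q_{22}=w_2+w_4$, $Q_{23}=w_4$, $Q_{33}=w_3+w_4$, with $w_1>0$ a free weight (to be taken small in the final step) and $w_2$ prescribed by~\eqref{eq:defw2}. Positive definiteness of $Q$ amounts to $w_1>0$, $w_2+w_4>0$, $w_3+w_4>0$ and $(w_2+w_4)(w_3+w_4)-w_4^2>0$; the first three hold by hypothesis (the middle one is in fact forced by~\eqref{eq:defw2} together with $w_3+w_4>0$, $w_4\ge 0$, $\delta_2>0$, $d_1>0$), while substituting~\eqref{eq:defw2} into the last minor and using~\eqref{eq:nota1} bounds it below by a positive multiple of $(w_3+w_4)^2$. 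In the special case $w_4=0$, $w_3>0$ all of this is immediate.

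Next I would compute $M:=A^*Q+QA+\sum_jB_j^*QB_j$ entrywise from~\eqref{eq:defA}–\eqref{eq:defdelta}; note that $M_{13}=0$ automatically. The decisive observation is that the defining relation~\eqref{eq:defw2} for $w_2$ is exactly equivalent to $M_{23}=0$ (here one uses $\delta_2=\alpha N^\dag$ with $N^\dag$ as in~\eqref{eq:MNZ}). Once $M_{23}=0$, the matrix $M$ splits as the orthogonal direct sum of the scalar $M_{33}$ on $\operatorname{span}(e_3)$ and the $2\times 2$ block $\begin{pmatrix}M_{11}&M_{12}\\ M_{12}&M_{22}\end{pmatrix}$ on $\operatorname{span}(e_1,e_2)$, so that $M\prec 0$ is equivalent to three scalar conditions. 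The condition $M_{33}<0$ rearranges into the bound on $\sigma_3^2$ in~\eqref{eq:n1}, and~\eqref{eq:nota1} is precisely what makes that bound strictly positive. The condition $M_{11}<0$ is $\sigma_1^2<-2\delta_1$, i.e.~\eqref{eq:n1new}. Finally $M_{11}M_{22}-M_{12}^2>0$ (together with $M_{11}<0$, which forces $M_{22}<0$) is, after inserting the explicit entries and choosing $w_1$ small enough, the bound on $\sigma_2^2$ in~\eqref{eq:n1}, with~\eqref{eq:nota2} guaranteeing that bound is strictly positive. Assembling these three, $M\prec 0$, so choosing $\varepsilon\in(0,\min\{\lambda_{\min}(Q),\lambda_{\min}(-M)\})$ makes~\eqref{LMIeq} feasible and the null solution of~\eqref{eq:SDEE3} exponentially mean-square stable. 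The closing assertion — that $w_3>0$, $w_4=0$ yields $w_3+w_4>0$, $w_2+w_4>0$, \eqref{eq:nota1} and~\eqref{eq:nota2} — is then checked by substitution into~\eqref{eq:defw2}, using $\delta_1<0$, $\delta_2>0$, $\delta_3<0$ and the positivity of the parameters.

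I expect the only genuine difficulty to be the algebraic bookkeeping: carrying out the symmetrization $A^*Q+QA$, passing between $\delta_1,\delta_2,\delta_3$ and the original parameters without sign errors, and checking that the three reduced inequalities land exactly on~\eqref{eq:n1new}, \eqref{eq:nota1}–\eqref{eq:nota2} and~\eqref{eq:n1} — in particular pinning down the role of the free weight $w_1$ in the determinant condition $M_{11}M_{22}-M_{12}^2>0$. Conceptually the argument is driven entirely by the decoupling identity ``$M_{23}=0\iff$~\eqref{eq:defw2}'', which collapses a $3\times 3$ definiteness problem into separate bounds on $\sigma_1,\sigma_2,\sigma_3$.
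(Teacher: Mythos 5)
Your route is, once the notation is unwound, the same as the paper's: for $V(u)=u^*Qu$ with your $Q=\operatorname{diag}(w_1,w_2,w_3)+w_4(e_2+e_3)(e_2+e_3)^*$ one has $(\mathcal{L}V)(u)=u^*Mu$ with $M=A^*Q+QA+\sum_jB_j^*QB_j$, so your LMI reduction via Corollary~11.4.14 of~\cite{Arnold} and the paper's direct computation of $\mathcal{L}V$ followed by Theorem~5.11 of~\cite{Khasminskii} are the same calculation; the paper simply fixes $w_1=1$, and its appendix matrix ``$Q$'' is $-M$ up to a positive factor. Your key observation that \eqref{eq:defw2} is exactly $M_{23}=0$ is the paper's identity~\eqref{eq:sat}, and the resulting block decoupling into the conditions $M_{11}<0$, $M_{33}<0$ and $M_{11}M_{22}-M_{12}^2>0$ is Sylvester's criterion~\eqref{eq:nr}. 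The first two do land exactly on~\eqref{eq:n1new} and on the $\sigma_3^2$-bound in~\eqref{eq:n1}, with~\eqref{eq:nota1} making the latter bound positive. Your verification that~\eqref{eq:nota1} also forces $(w_2+w_4)(w_3+w_4)-w_4^2>0$, hence $Q\succ0$, is a point where you are more careful than the paper, which does not check positive definiteness of $V$.

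The gap is precisely in the step you defer to ``algebraic bookkeeping''. Computing the entries gives $M_{11}=w_1(2\delta_1+\sigma_1^2)$, $M_{12}=w_1\delta_3$ and $M_{22}=-2d_1w_4+(w_2+w_4)\sigma_2^2$, so the determinant condition $M_{11}M_{22}-M_{12}^2>0$ (after dividing by $w_1>0$ and using $2\delta_1+\sigma_1^2=-|2\delta_1+\sigma_1^2|$) is
\begin{equation}
\sigma_2^2<\frac{2d_1w_4}{w_2+w_4}-\frac{w_1\,\delta_3^2}{(w_2+w_4)\,\left|2\delta_1+\sigma_1^2\right|}.
\end{equation}
This does \emph{not} land on the first inequality of~\eqref{eq:n1}: the $\delta_3^2$ term enters with a minus sign, and your free weight $w_1>0$ only attenuates it as $w_1\to0^+$ (pushing the bound up to, but never beyond, $2d_1w_4/(w_2+w_4)$); no admissible $w_1$ reverses the sign. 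Consequently the range of $\sigma_2^2$ certified by this argument is strictly smaller than the one claimed in~\eqref{eq:n1}, and for the ``particular choice'' $w_4=0$ the right-hand side is negative, so nothing is certified — indeed with $w_4=0$ one has $M_{22}=(w_2+w_4)\sigma_2^2\ge0$ while $M_{12}=w_1\delta_3\neq0$, so the upper $2\times2$ block of $M$ is never negative definite. To be fair, the paper's own proof (with $w_1=1$) produces exactly the same minus sign from Sylvester's criterion, so the stated ``$+$'' in~\eqref{eq:nota2} and~\eqref{eq:n1} is already inconsistent with the paper's argument; but since your proposal asserts that the determinant condition ``is the bound on $\sigma_2^2$ in~\eqref{eq:n1}'', that assertion is a genuine gap that cannot be closed along this route, and the closing claim about $w_3>0$, $w_4=0$ fails with it.
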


\begin{proof}
For short, we write $U_1(t):=\widetilde{M}(t)-M^\dag$,
$U_2(t):=\widetilde{N}(t)-N^\dag$ and $U_3(t):=\widetilde{Z}(t)-Z^\dag$ for all $t\geq 0$.
In the sequel, we construct a Lyapunov function to verify that the null solution of the linear stochastic system~\eqref{eq:SDEE3} is exponentially mean-square stable.
Let $V:\mathbb{R}^3\to [0,\infty)$ be defined by
\begin{equation}
V(u):=\frac{1}{2}[u^2_1+w_2 u^2_2+w_3u^2_3+w_4(u_2+u_3)^2]\quad \textrm{ with }\quad u=(u_1,u_2,u_3)^*,
\end{equation}
where $w_2$, $w_3$ and $w_4$ are suitable constants that ensure that $V(u)\geq 0$ for all $u\in \mathbb{R}^3$. Such constants are defined later on.
Note that
\begin{equation}\label{eq:defV}
\nabla_u V(u)=(u_1,w_2u_2+w_4(u_2+u_3),w_3u_3+w_4(u_2+u_3)),
\end{equation}
where $\nabla_u V(u)$ denotes the gradient of $V$ at $u$,
and
\begin{equation}\label{eq:defH}
\textsf{Hess}_u (V(u))
=\begin{pmatrix}
1 & 0 &0\\
0 & w_2+w_4 & w_4\\
0 & w_4 & w_3+w_4
\end{pmatrix},
\end{equation}
where $\textsf{Hess}_u (V(u))$ denotes the Hessian matrix of the scalar function $V$ at $u$.
By~\eqref{eq:defA} and~\eqref{eq:defV} we have
\begin{equation}\label{eq:part1}
\begin{split}
\langle Au, (\nabla_u V(u))^*
 \rangle&=
\left\langle \begin{pmatrix}
\delta_1 u_1+\delta_3 u_2\\
\frac{\beta}{\alpha}\delta_2 u_3\\
-d_1 u_2-\frac{sd_1}{\beta k_2}u_3
\end{pmatrix}, \begin{pmatrix}
u_1\\
w_2u_2+w_4(u_2+u_3)\\
w_3u_3+w_4(u_2+u_3)
\end{pmatrix}
\right \rangle\\
&=\delta_1 u^2_1+\delta_3 u_1u_2+\frac{\beta}{\alpha}\delta_2 u_3(w_2u_2+w_4(u_2+u_3))\\
&\qquad+\left(-d_1u_2-\frac{sd_1}{\beta k_2}u_3\right)(w_3u_3+w_4(u_2+u_3))\\
&=\delta_1 u^2_1+\delta_3 u_1u_2+\frac{\beta}{\alpha}\delta_2 u_3w_2u_2+\frac{\beta}{\alpha}\delta_2 u_3w_4(u_2+u_3)\\
&\qquad-\left(d_1u_2+\frac{sd_1}{\beta k_2}u_3\right)w_3u_3
-\left(d_1u_2+\frac{sd_1}{\beta k_2}u_3\right)w_4(u_2+u_3)\\
&=\delta_1 u^2_1+(-d_1w_4)u^2_2+\left(\frac{\beta}{\alpha}\delta_2 w_4-\frac{sd_1 w_3}{\beta k_2}-\frac{sd_1w_4}{\beta k_2}\right)u^2_3
\\
&\qquad +\left(\frac{\delta_3}{2}\right)2u_1u_2+ (0)2u_1u_3\\
&\qquad+\left(
\frac{\beta}{2\alpha}\delta_2 w_2+\frac{\beta}{2\alpha}\delta_2 w_4-\frac{d_1w_3}{2}-\frac{d_1w_4}{2}-\frac{sd_1}{2\beta k_2}w_4\right)2u_2u_3,
\end{split}
\end{equation}
where $\langle\cdot,\cdot \rangle$ denotes the standard inner product in $\mathbb{R}^3$.
By~\eqref{eq:defw2} we have that
\[
w_2=\frac{\alpha}{\beta \delta_2}\left[
\left(d_1-\frac{\beta}{\alpha}\delta_2+\frac{sd_1}{\beta k_2} \right)w_4+d_1w_3 \right],
\]
which can be written as
\begin{equation}\label{eq:sat}
\frac{\beta}{\alpha}\delta_2 w_2+\frac{\beta}{\alpha}\delta_2 w_4-d_1w_3-d_1w_4-\frac{sd_1}{\beta k_2}w_4=0.
\end{equation}
By~\eqref{eq:part1} and~\eqref{eq:sat} we obtain
\begin{equation}\label{eq:drift}
\begin{split}
\langle Au, (\nabla_u V(u))^*
 \rangle
&=\delta_1 u^2_1+(-d_1w_4)u^2_2+\left(\frac{\beta}{\alpha}\delta_2 w_4-\frac{sd_1 w_3}{\beta k_2}-\frac{sd_1w_4}{\beta k_2}\right)u^2_3
\\
&\qquad +\left(\frac{\delta_3}{2}\right)2u_1u_2+ (0)2u_1u_3+(0)2u_2u_3.
\end{split}
\end{equation}
Now, we compute the contribution of the Brownian motion  in the generator (the so-called quadratic variation). By~\eqref{eq:defG} and~\eqref{eq:defH} we obtain
\begin{equation}\label{eq:vola}
\begin{split}
\frac{1}{2}\textrm{Trace}\left((G(u))^*\textsf{Hess}_u (V(u)) G(u)\right)=\frac{1}{2}[\sigma^2_1 u^2_1+(w_2+w_4)\sigma^2_2 u^2_2+(w_3+w_4)\sigma^2_3 u_3^2],
\end{split}
\end{equation}
where $\textrm{Trace}$ denotes the usual trace operator defined in $\mathbb{R}^{3\times 3}$.
By~\eqref{eq:drift} and~\eqref{eq:vola} we have that the generator of~\eqref{eq:SDEE3} is given by
\begin{equation}
\begin{split}
(\mathcal{L}V)(u)&:=\langle Au, (\nabla_u V(u))^*
 \rangle+\frac{1}{2}\textrm{Trace}\left((G(u))^*\textsf{Hess}_u (V(u)) G(u)\right)\\
 &=\delta_1 u^2_1+(-d_1w_4)u^2_2+\left(\frac{\beta}{\alpha}\delta_2 w_4-\frac{sd_1 w_3}{\beta k_2}-\frac{sd_1w_4}{\beta k_2}\right)u^2_3
\\
&\qquad +\left(\frac{\delta_3}{2}\right)2u_1u_2+ (0)2u_1u_3+(0)2u_2u_3\\
&\qquad+\frac{1}{2}[\sigma^2_1 u^2_1+(w_2+w_4)\sigma^2_2 u^2_2+(w_3+w_4)\sigma^2_3 u_3^2]\\
&=\left(\delta_1+\frac{1}{2}\sigma^2_1\right)u^2_1+
\left(-d_1w_4+\frac{1}{2}(w_2+w_4)\sigma^2_2\right)u^2_2\\
&\qquad \left(\frac{\beta}{\alpha}\delta_2 w_4-\frac{sd_1 w_3}{\beta k_2}-\frac{sd_1w_4}{\beta k_2}+\frac{1}{2}(w_3+w_4)\sigma^2_3\right)u^2_3\\
&\qquad+\left(\frac{\delta_3}{2}\right)2u_1u_2+ (0)2u_1u_3+(0)2u_2u_3=-u^*Q u,
\end{split}
\end{equation}
where
$Q=(Q_{i,j})_{i,j\in \{1,2,3\}}\in \mathbb{R}^{3\times 3}$
with
\begin{equation}
\begin{split}
Q_{1,1}:&=-\delta_1-\frac{1}{2}\sigma^2_1,\quad Q_{2,2}:=d_1w_4-\frac{1}{2}(w_2+w_4)\sigma^2_2,\\
Q_{3,3}:&=-\frac{\beta}{\alpha}\delta_2 w_4+\frac{sd_1 w_3}{\beta k_2}+\frac{sd_1w_4}{\beta k_2}-\frac{1}{2}(w_3+w_4)\sigma^2_3, \quad Q_{1,2}:=Q_{2,1}:=-\frac{\delta_3}{2},\\
Q_{1,3}:&=Q_{2,3}:=Q_{3,1}:=Q_{3,2}:=0.
\end{split}
\end{equation}
We stress that $Q$ is a symmetric matrix.
By Sylvester's criterion (see Theorem~7.2.5 p.~439~in~\cite{Horn}) we have that $Q$ is positive definite ($Q$ has positive eigenvalues), if and only if,
\begin{equation}\label{eq:nr}
Q_{1,1}>0,\quad Q_{3,3}>0
\quad \textrm{ and }\quad
Q_{1,1}Q_{2,2}-Q_{2,1}Q_{1,2}>0.
\end{equation}

It is not hard to see that~\eqref{eq:n1new} and~\eqref{eq:n1} are equivalent to~\eqref{eq:nr}.
Finally, applying Theorem~5.11 in~\cite{Khasminskii} we conclude the statement of the theorem.
\end{proof}

\section*{\textbf{Declarations}}\hfill

\noindent
\textbf{Acknowledgments.}
\hfill

\noindent
The authors are greatly indebted to Dr.~Mohsen Kheirandishfard for his advise on solving BMI problems and to Prof.~Waldemar Barrera (Autonomous University of Yucatan, UADY, Mexico)
for providing to us the reference~\cite{Harpe} and pointing out the Table-Tennis Lemma.
G. Barrera would like to express his gratitude to University of Helsinki (Finland), Aalto University (Finland) and the Instituto Superior T\'ecnico (Portugal), for all the facilities used along the realization of this work. He also would like to thank the University of Iceland for the hospitality and support during the research visit in May 2023, where partial work on this paper was undertaken.

\noindent
\textbf{Funding.}
\hfill

\noindent
The research done for this paper was partially supported by the Icelandic Research Fund (Rann\'is) in the project ``Lyapunov Methods and Stochastic Stability'' (152429-051), which is gratefully acknowledged.
The research of G.~Barrera has been supported by the Academy of Finland,
via an Academy project (project No. 339228) and the Finnish Centre of Excellence in Randomness and Structures (decision numbers 346306 and 346308).

\medskip

\noindent
\textbf{Ethical approval.}
\hfill

\noindent
Not applicable.

\medskip

\noindent
\textbf{Competing interests.}
\hfill

\noindent
The authors declare that they have no conflict of interest.

\medskip

\noindent
\textbf{Authors' contributions.}
\hfill

\noindent
All authors have contributed equally to the paper.

\medskip

\noindent
\textbf{Availability of data and materials.}
\hfill

\noindent
Data sharing not applicable to this article as no data-sets were generated or analyzed during the current study.

\bibliographystyle{amsplain}

\end{document}